    \tikzset{node distance=3cm and 5cm, auto}
\def\thm@space@setup{%
  \thm@preskip=\parskip
  \thm@postskip=0pt
}
\theoremstyle{definition}
\newtheorem{theorem}{Theorem}[section]
\newtheorem*{theorem*}{Theorem}
\newtheorem*{corollary*}{Corollary}
\newtheorem{lemma}[theorem]{Lemma}
\newtheorem*{lemma*}{Lemma}
\newtheorem{proposition}[theorem]{Proposition}
\newtheorem*{proposition*}{Proposition}
\newtheorem{conjecture}[theorem]{Conjecture}
\newtheorem*{conjecture*}{Conjecture}
\newtheorem{definition}[theorem]{Definition}
\newtheorem*{definition*}{Definition}
\newtheorem*{notation*}{Notation}
\newtheorem{remark}[theorem]{Remark}
\newtheorem*{remark*}{Remark}
\newtheorem{example}[theorem]{Example}
\newtheorem*{example*}{Example}
\definecolor{fncol}{HTML}{666600}
\newcommand{\slice}[1]{/_{\hspace{-1pt}#1}}
\newcommand{\pullback}{\arrow[dr, draw=none, "\text{\Large$\lrcorner$}" description, pos=0.1]}
\newcommand{\pullbackc}[3][0]{\arrow[#2, draw=none, "\text{\rotatebox{#1}{\Large$\lrcorner$}}" description, pos=#3]}
\newcommand{\seqbn}[2]{\left(\left. #1 \ \middle\rvert\ #2 \right.\right)}
\let\nsum\sum
\let\nprod\prod
\renewcommand{\sum}{\nsum\limits}
\renewcommand{\prod}{\nprod\limits}
\newcommand{\declpoly}[7]{#1 \xleftarrow{#5} #2 \xrightarrow{#6} #3 \xrightarrow{#7} #4}
\newcommand{\pto}{%
  \mathrel{\ooalign{\hfil$\vcenter{
    \hbox{\scalebox{0.7}{$\scriptscriptstyle|\,$}}}$\hfil\cr$\to$\cr}
  }%
}
\newcommand{\pRightarrow}{%
  \mathrel{\ooalign{\hfil$\vcenter{
    \hbox{\scalebox{0.7}{$\scriptscriptstyle|\,$}}}$\hfil\cr$\Rightarrow$\cr}
  }%
}
\newcommand{\pRrightarrow}{%
  \mathrel{\ooalign{\hfil$\vcenter{
    \hbox{\scalebox{0.7}{$\scriptscriptstyle|\,$}}}$\hfil\cr$\Rrightarrow$\cr}
  }%
}
\newcommand{\cart}{^{\mathrm{cart}}}
\setlist{topsep=0pt}
\newenvironment{enumabc}{\begin{enumerate}[label={(\alph*)}]}{\end{enumerate}}
\newenvironment{enumrom}{\begin{enumerate}[label={(\roman*)}]}{\end{enumerate}}
\numberwithin{equation}{section}
\newcommand{\twocell}[1]{\text{\rotatebox[origin=c]{#1}{$\Rightarrow$}}}
\newcommand{\threecell}[1]{\text{\rotatebox[origin=c]{#1}{$\Rrightarrow$}}}
\newcommand{\nameof}[1]{\left[ #1 \right]}
\newcommand{\eqpunct}[1]{#1}
\title{Polynomial pseudomonads and dependent type theory}
\author{%
    \begin{minipage}[t]{180pt} \centering
       {\scshape Steve Awodey}\\
       ~\vspace{-7pt}\\
       {\small Departments of Philosophy\\
       and Mathematical Sciences\\
       Carnegie Mellon University}
    \end{minipage}
    \begin{minipage}[t]{180pt} \centering
       {\scshape Clive Newstead}\\
       ~\vspace{-7pt}\\
       {\small Department of Mathematical Sciences\\
       Carnegie Mellon University}
    \end{minipage}
}
\date{\customdate\today}
\begin{document}

\maketitle

\thispagestyle{empty}
\begin{abstract} \noindent
We assemble polynomials in a locally cartesian closed category into a tricategory, allowing us to define the notion of a \textit{polynomial pseudomonad} and \textit{polynomial pseudoalgebra}. Working in the context of \textit{natural models} of type theory, we prove that dependent type theories admitting a unit type and dependent sum types give rise to polynomial pseudomonads, and that those admitting dependent product types give rise to polynomial pseudoalgebras.
\end{abstract}

\tableofcontents

%%%%%%%%%%%%%%%%%%%%%%%%%%%%%%%%%%%%%%%%%%%%%%%%%%%
%%
%% Review of polynomial monads
%%
%%%%%%%%%%%%%%%%%%%%%%%%%%%%%%%%%%%%%%%%%%%%%%%%%%%
\newpage
\section{Review of polynomial monads}
\label{secPolynomialMonads}

The material reviewed in this section can be found entirely in \cite{GambinoKock2013}, with the exception of Lemma \ref{lemPolynomialsFromOneToOne}.

Recall that locally cartesian closed categories $\mathcal{E}$ are characterised by the fact that every morphism $f : B \to A$ induces a triple of adjoint functors
\begin{center}
\begin{tikzcd}[column sep=huge, row sep=huge]
\mathcal{E} \slice{A}
\arrow[rr, "\Delta_f" description, ""{name=codtop}, ""'{name=dombot}]
&&
\mathcal{E} \slice{B}
\arrow[ll, bend right, "\Sigma_f"', ""{name=domtop}]
\arrow[ll, bend left, "\Pi_f", ""'{name=codbot}]
\arrow[draw=none, from=domtop, to=codtop, inner sep={1pt}, "\bot" description]
\arrow[draw=none, from=dombot, to=codbot, inner sep={1pt}, "\bot" description]
\end{tikzcd}
\eqpunct{,}
\end{center}
where $\Sigma_f$ is given by postcomposition with $f$ and $\Delta_f$ is given by pullback along $f$. This condition is equivalent to the assertion that all slices of $\mathcal{E}$ are cartesian closed. We adopt the convention that locally cartesian closed categories have a terminal object, so in particular they have all finite limits.

We emphasise that locally cartesian closed categories are categories with additional \textit{structure}. In particular, given a morphism $x : X \to A$, the functor $\Delta_f : \mathcal{E}\slice{A} \to \mathcal{E}\slice{B}$ gives a \textit{choice} of pullback $\Delta_f(x) : \Delta_f X \to B$ of $x$ along $f$.

Examples of locally cartesian closed categories include the category $\mathbf{Set}$ of sets, the category $\widehat{\mathbb{C}}=\mathbf{Set}^{\mathbb{C}^{\mathrm{op}}}$ of presheaves on a small category $\mathbb{C}$, and more generally, any topos.

In what follows, unless otherwise specified, $\mathcal{E}$ will be a fixed locally cartesian closed category.

\begin{definition}
\label{defPolynomial}
A \textbf{polynomial} $F=(s,f,t)$ from $I$ to $J$ in $\mathcal{E}$ is a diagram of the form
\begin{center}
\begin{tikzcd}[row sep=small, column sep=small]
&
B
\arrow[dl,"s"']
\arrow[r,"f"]
&
A
\arrow[dr,"t"]
&
\\
I
\arrow[rrr, draw=none, purple, "F" description]
&&&
J
\end{tikzcd}
\eqpunct{.}
\end{center}
We will write $F : I \pto J$ to denote the assertion that $F$ is a polynomial from $I$ to $J$ in $\mathcal{E}$, and if we wish to be more precise, we will write $F : \declpoly IBAJsft$ to specify all the data.
\end{definition}

\begin{example}
\label{exExamplesOfPolynomials}
Some examples of polynomials include:
\begin{enumabc}
\item Every morphism $f : B \to A$ can be considered as a polynomial $1 \pto 1$, by taking $s,t$ to be the unique morphisms to the terminal object. In the following, we will blur the distinction between morphisms of $\mathcal{E}$ and polynomials $1 \pto 1$ in $\mathcal{E}$.
\item Given an object $I$, the \textbf{identity polynomial} on $I$ is given by $i_I : \declpoly IIII{\mathrm{id}_I}{\mathrm{id}_I}{\mathrm{id}_I}$.
\item Every span $I \xleftarrow{s} A \xrightarrow{t} J$ induces a polynomial $\declpoly IAAJs{\mathrm{id}_A}t$. Polynomials induced by spans are called \textbf{linear polynomials}.
\end{enumabc}
\end{example}

\begin{definition}
\label{defExtension}
\label{defPolynomialFunctor}
Let $F : \declpoly IBAJsft$ be a polynomial in $\mathcal{E}$. The \textbf{extension} of $F$ is the functor
$$P_F = \Sigma_t \Pi_f \Delta_s : \mathcal{E} \slice{I} \to \mathcal{E} \slice{J}
\eqpunct{.}$$
A \textbf{polynomial functor} is one that is naturally isomorphic to the extension of a polynomial.
\end{definition}

We can describe the extension $P_F$ of a polynomial $F : \declpoly IBAJs{}{}$ in the internal language of $\mathcal{E}$ by
$$P_F(X_i \mid i \in I) = \seqbn{\sum_{a \in A_j} \prod_{b \in B_a} X_{s(b)}}{j \in J}
\eqpunct{.}$$

\begin{example}
The extensions of the polynomials in Example \ref{exExamplesOfPolynomials} are given by the following:
\begin{enumabc}
\item If $f : B \to A$ is a morphism of $\mathcal{E}$, considered as a polynomial $1 \pto 1$, then $P_f : \mathcal{E} \to \mathcal{E}$ is a \textbf{polynomial endofunctor}, defined in the internal language by $P_f(X) = \sum_{a \in A} X^{B_a}$.
\item The extension of the identity polynomial $i_I : I \pto I$ is the identity functor $\mathcal{E} \slice{I} \to \mathcal{E} \slice{I}$.
\item The extension of a linear polynomial $F : \declpoly IAAJs{\mathrm{id}_A}t$ is the functor $P_F : \mathcal{E}\slice{I} \to \mathcal{E}\slice{J}$ defined by $P_F(X_i \mid i \in I) = \seqbn{\sum_{a \in A} X_{s(a)}}{j \in J}$. This justifies the term `linear'.
\end{enumabc}
\end{example}

\begin{definition}
\label{defPolynomialComposition}
\index{polynomial composition}
Let $F : \declpoly IBAJsft$ and $G : \declpoly JDCKugv$ be polynomials in $\mathcal{E}$. The \textbf{polynomial composite} of $G$ with $F$ is the polynomial $G \cdot F : I \pto K$ as in
\begin{center}
\begin{tikzcd}
&
N
\arrow[dl,"s \circ n"']
\arrow[r,"q \circ p"]
&
M
\arrow[dr,"v \circ w"]
&
\\
I
\arrow[rrr, draw=none, purple, "G \cdot F" description]
&&&
K
\end{tikzcd}
\eqpunct{.}
\end{center}
where $n,p,q,w$ are defined as in Figure \ref{figPolynomialComposition}, in which {\color{blue} (1)} is a pullback square, $w = \Pi_g(h)$, {\color{blue} (2)} is a pullback square, $e$ is the component at $h$ of the counit of the adjunction $\Delta_g \dashv \Pi_g$, and {\color{blue} (3)} is a pullback square.
\begin{figure}[ht]
\centering
\begin{tikzcd}[row sep=large, column sep=large]
&&&
N
\pullbackc{d}{0.1}
\arrow[ddll,"n"']
\arrow[r,"p"]
&
\bullet
\pullback
\arrow[r,"q"]
\arrow[dd]%,"w'" description]
\arrow[dl,"e" description]
\arrow[ddr,blue,"\text{(2)}" description,draw=none]
\arrow[ddlll,blue,"\text{(3)}" description,draw=none]
&
M
\arrow[dd,"w"]
&
\\
&&&
\bullet
\pullbackc[-45]{dd}{0.05}
\arrow[dr,"h" description]
\arrow[dl]%,"k" description]
\arrow[dd,blue,"\text{(1)}" description,draw=none]
&&{~}&
\\
&
B
\arrow[r,"f"']
\arrow[dl,"s"']
&
A
\arrow[dr,"t"']
&&
D
\arrow[dl,"u"]
\arrow[r,"g"']
&
C
\arrow[dr,"v"]
&
\\
I
&&&
J
&&&
K
\\
\end{tikzcd}
\caption{Construction of the polynomial composite of $F$ with $G$.}
\label{figPolynomialComposition}
\end{figure}
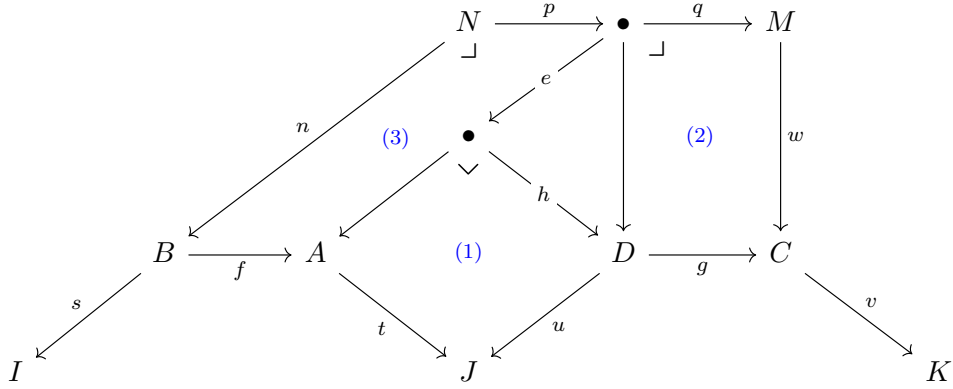
\end{definition}

\begin{remark}
\label{rmkPolynomialCompositionExplicit}
As explained in \cite{GambinoKock2013}, in the internal language of $\mathcal{E}$, we have
$$(M \xrightarrow{q \circ p} K) = \seqbn{\sum_{c \in C_k} \prod_{d \in D_c} A_{u(d)}}{k \in K} \text{ and } (N \xrightarrow{v \circ w} M) = \seqbn{\sum_{d \in D_c} B_{m(d)}}{(c,m) \in M}
\eqpunct{,}$$
so that, in full detail, we can write
$$P_{G \cdot F}(X) = \seqbn{\sum_{(c,m) \in \left(\sum_{c \in C_k} \prod_{d \in D_c} A_{u(d)}\right)} \prod_{(d,b) \in \left(\sum_{d \in D_c} B_{m(d)}\right)} X_{u(d)}}{k \in K}
\eqpunct{.}$$
\end{remark}

This definition of composition of polynomials is motivated by the following.

\begin{proposition}[\textit{Extension preserves composition of polynomials}]
\label{propExtensionPreservesPolynomialComposition}
Let $F : I \pto J$ and $G : J \pto K$ be polynomials in $\mathcal{E}$. There is a natural isomorphism
$$P_{G \cdot F} \cong P_G \circ P_F : \mathcal{E} \slice{I} \to \mathcal{E} \slice{K}
\eqpunct{.}$$
\end{proposition}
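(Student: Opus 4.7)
The plan is to start from the expanded form
\[ P_G \circ P_F \;=\; \Sigma_v \Pi_g \Delta_u \Sigma_t \Pi_f \Delta_s \]
and, by three successive commutations, rewrite this composite in the form $\Sigma_{v \circ w} \Pi_{q \circ p} \Delta_{s \circ n}$, which is by definition $P_{G \cdot F}$. Each of the three commutations will use exactly one of the pullback squares (1), (2), (3) in Figure \ref{figPolynomialComposition}, and I will invoke the pseudofunctoriality of the assignments $f \mapsto \Sigma_f$, $f \mapsto \Pi_f$, $f \mapsto \Delta_f$ silently, to absorb adjacent functors whose indices compose.

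Concretely, I would proceed as follows. First, apply the Beck--Chevalley isomorphism to pullback (1), whose projections I will call $h$ (to $D$) and $\pi$ (to $A$): this gives $\Delta_u \Sigma_t \cong \Sigma_h \Delta_\pi$, yielding $\Sigma_v \Pi_g \Sigma_h \Delta_\pi \Pi_f \Delta_s$. Second, apply the distributivity law of a locally cartesian closed category to the composable pair $h$ and $g$: since $w = \Pi_g(h)$ and the counit $e$ of $\Delta_g \dashv \Pi_g$ fits into pullback (2), distributivity gives $\Pi_g \Sigma_h \cong \Sigma_w \Pi_q \Delta_e$; absorbing $\Sigma_v \Sigma_w \cong \Sigma_{v \circ w}$ and $\Delta_e \Delta_\pi \cong \Delta_{\pi \circ e}$ produces $\Sigma_{v \circ w} \Pi_q \Delta_{\pi \circ e} \Pi_f \Delta_s$. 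Third, apply Beck--Chevalley to pullback (3), whose projections are $n$ (to $B$) and $p$ (to the top vertex), to get $\Delta_{\pi \circ e} \Pi_f \cong \Pi_p \Delta_n$; a final absorption of $\Pi_q \Pi_p$ and $\Delta_n \Delta_s$ yields $\Sigma_{v \circ w} \Pi_{q \circ p} \Delta_{s \circ n} = P_{G \cdot F}$.

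The hard part will not be the algebraic chain itself --- once the three ingredients are granted, the sequence of isomorphisms is essentially forced --- but the bookkeeping required to check that the morphisms produced abstractly by the Beck--Chevalley and distributivity isomorphisms agree on the nose with the morphisms $n, p, q, w$ specified geometrically in Figure \ref{figPolynomialComposition}, rather than being merely isomorphic via some mediating map. Verifying this coherence reduces to carefully unfolding the universal properties of the pullback squares (1)--(3) together with the unit/counit data of the adjunctions $\Delta_f \dashv \Pi_f$ and $\Delta_g \dashv \Pi_g$, and it is precisely this coherence that justifies the particular construction of $G \cdot F$ given in Definition \ref{defPolynomialComposition}.
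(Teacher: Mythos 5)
Your argument is correct and is precisely the standard proof (Beck--Chevalley on squares (1) and (3), distributivity on (2), then absorbing composable $\Sigma$'s, $\Pi$'s and $\Delta$'s) that the paper defers to by citing Gambino--Kock rather than proving the proposition itself. Your closing remark about checking that the abstractly produced comparison maps match the maps $n,p,q,w$ of Figure \ref{figPolynomialComposition} is well taken, but note that the definition of $G \cdot F$ is \emph{read off} from this very calculation, so that agreement holds by construction rather than requiring an independent verification.
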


\begin{definition}
\label{defMorphismOfPolynomials}
\index{morphism of polynomias}
Let $F : \declpoly IBAJsft$ and $G : \declpoly IDCJugv$ be polynomials from $I$ to $J$ in $\mathcal{E}$. A \textbf{morphism of polynomials} $\varphi$ from $F$ to $G$ consists of an object $D_{\varphi}$ of $\mathcal{E}$ and a triple $(\varphi_0, \varphi_1, \varphi_2)$ of morphisms in $\mathcal{E}$ fitting into a commutative diagram of the following form, in which the lower square is a pullback:
\begin{center}
\begin{tikzcd}[row sep=huge, column sep=huge]
&
B
\arrow[r,"f"]
\arrow[dl, bend right=20, "s"']
&
A
\arrow[d, equals]
\arrow[dr, bend left=20, "t"]
&
\\
I
&
D_{\varphi}
\pullback
\arrow[r]
\arrow[d,"\varphi_1"']
\arrow[u,"\varphi_2"]
&
A
\arrow[d,"\varphi_0"]
&
J
\\
&
D
\arrow[r,"g"']
\arrow[ul, bend left=20, "u"]
&
C
\arrow[ur, bend right=20, "v"']
&
\end{tikzcd}
\eqpunct{.}
\end{center}
We write $\varphi : F \pRightarrow G$ to denote the assertion that $\varphi$ is a morphism of polynomials from $F$ to $G$.
\end{definition}

Each morphism $\varphi : F \pRightarrow G$ of polynomials induces a strong\footnote{Every polynomial functor has a natural \textit{strength}, and the natural candidate for morphisms between polynomial functors are those natural transformations which are comptable with the strength. See \cite{GambinoKock2013} for more on this.} natural transformation $P_F \Rightarrow P_G$, which we shall by abuse of notation also call $\varphi$, whose component at $\vec X = (X_i \mid i \in I)$ can be expressed in the internal language of $\mathcal{E}$ by
$$(\varphi_{\vec X})_j : \sum_{a \in A_j} \prod_{b \in B_a} X_{s(b)} \to \sum_{c \in C_j} \prod_{d \in D_c} X_{u(b)}; \quad (\varphi_{\vec X})_j(a,t) = (\varphi_0(a), t \cdot (\varphi_2)_a \cdot (\varphi_1)_a^{-1})
\eqpunct{.}$$

\begin{definition}
\label{defCartesianMorphism}
A morphism $\varphi : F \pRightarrow G$ is \textbf{cartesian} if $\varphi_2$ is invertible.
\end{definition}

As the name suggests, if $\varphi : F \pRightarrow G$ is a cartesian morphism, then the induced strong natural transformation $P_F \Rightarrow P_G$ is cartesian.

\begin{remark}
\label{rmkCartesianMorphismIsPullbackSquare}
Every cartesian morphism of polynomials has a unique representation as a commutative diagram of the following form:

\begin{equation} \label{diaCartesianMorphism}
\hspace{72pt}
\begin{tikzcd}[row sep=normal, column sep=huge]
&
B
\arrow[dl, bend right=15, "s"']
\arrow[r, "f"]
\arrow[dd, "\varphi_1"']
\pullbackc{ddr}{0.05}
&
A
\arrow[dd, "\varphi_0"]
\arrow[dr, bend left=15, "t"]
&
\\
I
&&&
J
\\
&
D
\arrow[ul, bend left=15, "u"]
\arrow[r, "g"']
&
C
\arrow[ur, bend right=15, "v"']
&
\end{tikzcd}
\eqpunct{.}
\end{equation}

Indeed, if $(\varphi_0,\varphi_1,\varphi_2)$ is cartesian, replacing $\varphi_1$ in the above diagram by $\varphi_1 \circ \varphi_2^{-1}$ yields the desired diagram. Conversely, if $(\varphi_0,\varphi_1)$ are as in the above diagram, then $(\varphi_0,\varphi'_1,\varphi'_2)$ is a cartesian morphism of polynomials, where $\varphi'_1 : \Delta_{\varphi_0}D \to D$ is the chosen pullback of $\varphi_0$ along $g$ and $\varphi'_2 : \Delta_{\varphi_0}D \to B$ is the canonical isomorphism induced by the universal property of pullbacks, as illustrated in the following:
\begin{equation} \label{diaCartesianMorphismFromPullbackSquare}
\begin{tikzcd}[row sep=normal, column sep=normal]
&
B
\arrow[r,"f"]
\arrow[dl, bend right=20, "s"']
\arrow[dd, "\varphi_1"']
\pullback
&
A
\arrow[dr, bend left=20, "t"]
\arrow[dd, "\varphi_0"]
&
&& %%%%%%%%%%%
&
B
\arrow[r,"f"]
\arrow[dl, bend right=20, "s"']
&
A
\arrow[d, equals]
\arrow[dr, bend left=20, "t"]
&
\\
I
&&~&
J
&=& %%%%%%%%%%%
I
&
\Delta_{\varphi_0}D
\pullbackc{dr}{-0.05}
\arrow[r]
\arrow[d,"\varphi'_1"']
\arrow[u,"\varphi'_2", "\scriptsize\cong"']
&
A
\arrow[d,"\varphi_0"]
&
J
\\
&
D
\arrow[r,"g"']
\arrow[ul, bend left=20, "u"]
&
C
\arrow[ur, bend right=20, "v"']
&
&& %%%%%%%%%%%
&
D
\arrow[r,"g"']
\arrow[ul, bend left=20, "u"]
&
C
\arrow[ur, bend right=20, "v"']
&
\end{tikzcd}
\eqpunct{.}
\end{equation}

Note that, in general, for each diagram of the form \eqref{diaCartesianMorphism}, there are possibly many cartesian morphisms inducing it. Conversely, there are many potential ways of turning a diagram of the form \eqref{diaCartesianMorphism} into a cartesian morphism. Another possibility would be to take the induced cartesian morphism to be $(\varphi_0,\varphi_1,\mathrm{id}_B)$. Theorem \ref{thmPolyECartTrivial} below implies that these are essentially equivalent.

In particular, when $I=J=1$, we can regard pullback squares as cartesian morphisms in a canonical way.
\end{remark}

We are now ready to assemble polynomials into a bicategory (and polynomial functors into a $2$-category). In fact, as proved in \cite{GambinoKock2013}, more is true:

\begin{theorem}
\label{thmPolyEBicategory}
Let $\mathcal{E}$ be a locally cartesian closed category.
\begin{enumabc}
\item There is a bicategory $\mathbf{Poly}_{\mathcal{E}}$ whose 0-cells are the objects of $\mathcal{E}$, whose 1-cells are polynomials in $\mathcal{E}$, and whose 2-cells are morphisms of polynomials.
\item There is a $2$-category $\mathbf{PolyFun}_{\mathcal{E}}$ whose 0-cells are the slices $\mathcal{E} \slice{I}$ of $\mathcal{E}$, whose 1-cells are polynomial functors, and whose 2-cells are strong natural transformations.
\item Extension defines a biequivalence $\mathrm{Ext} : \mathbf{Poly}_{\mathcal{E}} \xrightarrow{\simeq} \mathbf{PolyFun}_{\mathcal{E}}$.
\item Parts (a)--(c) hold true if we restrict the 1-cells to \textit{cartesian} morphisms of polynomials in $\mathbf{Poly}_{\mathcal{E}}$ and \textit{cartesian} strong natural transformations in $\mathbf{PolyFun}_{\mathcal{E}}$; thus there is a bicategory $\mathbf{Poly}\cart_{\mathcal{E}}$ and a $2$-category $\mathbf{PolyFun}\cart_{\mathcal{E}}$, which are biequivalent.
\end{enumabc}
\end{theorem}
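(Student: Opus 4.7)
The plan is to build up the four parts sequentially, relying on Proposition \ref{propExtensionPreservesPolynomialComposition} to transfer structure back and forth between the polynomial side and the functor side, so that a coherence check done on one side can often be imported to the other for free.

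For part (a), the 0-cells, 1-cells, 2-cells, and horizontal composition are already given by Definitions \ref{defPolynomial}, \ref{defPolynomialComposition} and \ref{defMorphismOfPolynomials} together with Example \ref{exExamplesOfPolynomials}(b), so the real work is to exhibit associators $\alpha_{F,G,H} : (H \cdot G) \cdot F \xRightarrow{\cong} H \cdot (G \cdot F)$ and unitors $\lambda_F : i_J \cdot F \xRightarrow{\cong} F$ and $\rho_F : F \cdot i_I \xRightarrow{\cong} F$ and to check the pentagon and triangle. First I would argue that the construction in Definition \ref{defPolynomialComposition} is functorial in $F$ and $G$ up to canonical iso, since each stage is either a chosen pullback or a $\Pi_g$ and these are well-defined up to unique iso. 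The associator is then assembled from iterated pullback pastings together with Beck--Chevalley isomorphisms for the adjunctions $\Delta \dashv \Pi$; the unitors come directly from the fact that pulling back along an identity and applying $\Pi_{\mathrm{id}}$ do nothing up to canonical iso. Coherence reduces to a diagram chase in which both sides of pentagon and triangle satisfy the same universal property.

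For part (b), since ordinary functor composition is strictly associative, I only need to check that polynomial functors are closed under composition and contain the identities on $\mathcal{E}\slice{I}$; the first is Proposition \ref{propExtensionPreservesPolynomialComposition} and the second is Example \ref{exExamplesOfPolynomials}(b). Vertical and horizontal composition of strong natural transformations are the usual ones, and strength is preserved by both. For part (c), extension is a pseudofunctor $\mathrm{Ext} : \mathbf{Poly}_{\mathcal{E}} \to \mathbf{PolyFun}_{\mathcal{E}}$ with pseudofunctoriality constraint provided precisely by Proposition \ref{propExtensionPreservesPolynomialComposition}. It is essentially surjective on 0-cells by definition of the slices, and essentially surjective on 1-cells by the definition of polynomial functor. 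The heart of the biequivalence is that each hom-functor $\mathbf{Poly}_{\mathcal{E}}(I,J) \to \mathbf{PolyFun}_{\mathcal{E}}(\mathcal{E}\slice{I}, \mathcal{E}\slice{J})$ is an equivalence; faithfulness and fullness follow from the explicit description of strong natural transformations between polynomial functors in terms of the diagrams of Definition \ref{defMorphismOfPolynomials}, and essential surjectivity reiterates the representation of polynomial functors by polynomials.

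For part (d), I would check that cartesianness is preserved at each step: the composite of cartesian morphisms is cartesian because the pullback/counit/$\Pi$ construction transforms invertible $\varphi_2$-components into invertible components at each stage, the associators and unitors are themselves built from pullbacks and canonical isos and so are cartesian, and the $\varphi_2$-invertibility condition corresponds under extension precisely to cartesianness of the induced strong natural transformation. Thus the whole argument restricts to the cartesian setting. The main obstacle throughout is the coherence of the associator: pentagon reduces, after unfolding the definitions, to equating two iterated applications of Beck--Chevalley plus pullback-pasting, and is the most diagram-intensive step. In practice I would follow the detailed argument of Gambino--Kock rather than reconstruct it from scratch.
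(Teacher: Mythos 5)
The paper offers no proof of this theorem, quoting it verbatim from Gambino--Kock \cite{GambinoKock2013}, and your sketch is a faithful outline of the argument given there: composition via chosen pullbacks and $\Pi$, associators and unitors assembled from Beck--Chevalley isomorphisms and pullback pasting, the biequivalence via the representation of strong natural transformations by diagrams as in Definition \ref{defMorphismOfPolynomials}, and the cartesian restriction in part (d). The one place your sketch understates the difficulty is local full-and-faithfulness of $\mathrm{Ext}$ --- that \emph{every} strong natural transformation between extensions is induced by a \emph{unique} morphism of polynomials is the substantive representation theorem of \cite{GambinoKock2013}, not a consequence of the shape of Definition \ref{defMorphismOfPolynomials} alone --- but since you explicitly defer to that reference for the details, this is consistent with how the paper itself treats the result.
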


\begin{definition}
\label{defPolynomialMonad}
A \textbf{polynomial monad} is a monad in the bicategory $\mathbf{Poly}\cart_{\mathcal{E}}$. Specifically, a polynomial monad is a quadruple $\mathbb{P} = (I,p,\eta,\mu)$ consisting of an object $I$ of $\mathcal{E}$, a polynomial $p : I \pto I$ in $\mathcal{E}$ and cartesian morphisms of polynomials $\eta : i_1 \pRightarrow p$ and $\mu : p \cdot p \pRightarrow p$, satisfying the usual monad axioms, namely
$$\mu \circ (\mu \cdot p) = \mu \circ (p \cdot \mu) \quad \text{and} \quad \mu \circ (\eta \cdot p) = \mathrm{id}_p = \mu \circ (p \cdot \eta)
\eqpunct{.}$$
\end{definition}

\begin{remark}
\label{rmkPolynomialMonads}
What is usually (e.g.\ \cite{GambinoKock2013}) meant by a \textit{polynomial monad} is a monad $(P,\eta,\mu)$ on a slice $\mathcal{E}\slice{I}$ of $\mathcal{E}$, with $P : \mathcal{E}\slice{I} \to \mathcal{E}\slice{I}$ a polynomial functor and $\eta,\mu$ cartesian natural transformations; equivalently, this is a monad in the $2$-category $\mathbf{PolyFun}\cart_{\mathcal{E}}$. We recover this notion from Definition \ref{defPolynomialMonad} by applying the extension bifunctor $\mathbf{Poly}\cart_{\mathcal{E}} \to \mathbf{PolyFun}\cart_{\mathcal{E}}$. Furthermore, every polynomial monad in the usual sense is the extension of a polynomial monad in the sense of Definition \ref{defPolynomialMonad}.
\end{remark}

Before we continue, the following technical lemma will simplify matters for us greatly down the road, as it allows us in most instances to prove results about polynomials in the case when $I=J=1$.

\begin{lemma}
\label{lemPolynomialsFromOneToOne}
For fixed objects $I$ and $J$ of a locally cartesian closed category $\mathcal{E}$, there are isomorphisms of categories
$$S : \mathbf{Poly}_{\mathcal{E}}(I,J) \overset{\cong}{\longrightarrow} \mathbf{Poly}_{\mathcal{E}\slice{I \times J}}(1,1) \quad \text{and} \quad S\cart : \mathbf{Poly}\cart_{\mathcal{E}}(I,J) \overset{\cong}{\longrightarrow} \mathbf{Poly}\cart_{\mathcal{E} \slice{I \times J}}(1,1)
\eqpunct{.}$$
\end{lemma}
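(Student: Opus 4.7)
The plan is to exhibit $S$ and its inverse directly and verify they are strictly mutually inverse. Since the maps to the terminal object $1 = \mathrm{id}_{I \times J}$ of the slice are uniquely determined, a polynomial $1 \pto 1$ in $\mathcal{E}\slice{I \times J}$ unfolds simply to a morphism $\widetilde f : \widetilde B \to \widetilde A$ in that slice; explicitly, this is the data of two objects of $\mathcal{E}$ equipped with structure maps to $I \times J$ together with a morphism between them compatible with those structure maps.

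The first step is to establish the bijection on objects. Given a polynomial $F = \declpoly IBAJsft$ in $\mathcal{E}$, one constructs the corresponding morphism in $\mathcal{E}\slice{I \times J}$ by regarding $B$ as an object over $I \times J$ via the composite $(s, t \circ f) : B \to I \times J$, regarding $A$ as an object over $I \times J$ via the structure map appropriately derived from $t$, and letting $f$ provide the morphism in the slice. The inverse operation recovers the polynomial from a morphism $\widetilde f : \widetilde B \to \widetilde A$ in $\mathcal{E}\slice{I \times J}$ by reading off the $I$- and $J$-components of the structure map of $\widetilde A$ as the legs $s$ and $t$, with $f = \widetilde f$. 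I would check in both directions that these assignments return the original data on the nose.

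Next, I would extend $S$ to morphisms of polynomials. Since Definition \ref{defMorphismOfPolynomials} is stated uniformly for any locally cartesian closed category, a morphism of polynomials on one side induces a morphism on the other under the object bijection, with the pullback-square condition preserved verbatim. Functoriality is immediate from the naturality of the construction, and the assertion that $S$ and $S^{-1}$ are mutually inverse follows from the object-level bijection. The cartesian variant $S\cart$ is then obtained by restriction, since cartesian morphisms are those for which $\varphi_2$ is invertible (equivalently, by Remark \ref{rmkCartesianMorphismIsPullbackSquare}, those representable by a pullback square), a condition stable under the bijection.

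The main obstacle I expect is getting the object-level correspondence to be a genuine bijection rather than merely a full embedding: the polynomial $F$ in $\mathcal{E}$ has a morphism $s : B \to I$ with domain $B$, whereas the slice datum carries a structure map with domain $\widetilde A$, so one must pin down the construction (in particular the structure map for the lift of $A$) so that both the forward and backward maps are strict inverses and not simply adjoint. Once this is settled, the rest of the verification is routine bookkeeping.
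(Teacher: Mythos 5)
There is a genuine gap, and it is exactly the one you flag at the end but then leave unresolved. Your object-level construction keeps the codomain of the slice morphism as $A$ (``with structure map appropriately derived from $t$'') and takes the slice morphism to be $f$ itself. For $f : B \to A$ to be a morphism in $\mathcal{E}\slice{I \times J}$ from $(B, \langle s, t \circ f\rangle)$ to $(A, \sigma)$, one needs $\sigma \circ f = \langle s, t \circ f\rangle$, and in particular the $I$-component of $\sigma$ must be a map $A \to I$ through which $s : B \to I$ factors. No such map exists in general: take $A = 1$ and $s$ nonconstant. Your proposed inverse betrays the same problem --- reading off the $I$-component of the structure map of $\widetilde A$ as the leg $s$ does not even typecheck, since in a polynomial $I \pto J$ the leg $s$ has domain $B$, not $A$. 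So the correspondence as you describe it is not a bijection on objects; it is only defined for polynomials in which $s$ factors through $f$.

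The paper's construction resolves this by changing the codomain: it sets $S(F) = \langle s, f\rangle : B \to I \times A$, with $B$ over $I \times J$ via $\langle s, t \circ f\rangle$ and $I \times A$ over $I \times J$ via $\mathrm{id}_I \times t$. The extra factor of $I$ in the codomain is precisely what absorbs the leg $s$, which lives on $B$ and cannot be pushed down to $A$. On morphisms of polynomials $\varphi = (\varphi_0, \varphi_1, \varphi_2)$ one then takes $S(\varphi) = (\mathrm{id}_I \times \varphi_0, \varphi_1, \varphi_2)$, and the cartesian case follows as you say since $\varphi_2$ is untouched. Once the codomain is corrected in this way, the rest of your outline (functoriality, invertibility, restriction to the cartesian sub-bicategory) goes through, but without that correction the forward map is simply not defined on all of $\mathbf{Poly}_{\mathcal{E}}(I,J)$.
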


\begin{proof}[Proof sketch]
Given a polynomial $F : \declpoly IBAJsft$, define $S(F) = \langle s , f \rangle : B \to I \times A$ over $I \times J$ (considered as a polynomial $1 \pto 1$ in $\mathcal{E} \slice{I \times J}$) as in
\begin{center}
\begin{tikzcd}[row sep=huge, column sep=huge]
B
\arrow[rr, "{\langle s, f \rangle}"]
\arrow[dr, "{\langle s, t \circ f \rangle}"']
&&
I \times A
\arrow[dl, "{\mathrm{id}_I \times t}"]
\\
&
I \times J
&
\end{tikzcd}
\eqpunct{.}
\end{center}

Given a morphism of polynomials $\varphi : F \pRightarrow G$, as in
\begin{center}
\begin{tikzcd}[row sep=huge, column sep=huge]
&
B
\arrow[r,"f"]
\arrow[dl, bend right=20, "s"']
&
A
\arrow[d, equals]
\arrow[dr, bend left=20, "t"]
&
\\
I
&
D_{\varphi}
\pullback
\arrow[r]
\arrow[d,"\varphi_1"']
\arrow[u,"\varphi_2"]
&
A
\arrow[d,"\varphi_0"]
&
J
\\
&
D
\arrow[r,"g"']
\arrow[ul, bend left=20, "u"]
&
C
\arrow[ur, bend right=20, "v"']
&
\end{tikzcd}
\eqpunct{,}
\end{center}
define $S(\varphi) = (\mathrm{id}_I \times \varphi_0, \varphi_1, \varphi_2) : S(F) \pRightarrow S(G)$, as in the following diagram, where we consider $E$ as an object over $I \times J$ via $\langle s \circ \varphi_2, t \circ f \circ \varphi_2 \rangle : E \to I \times J$.
\begin{center}
\begin{tikzcd}[row sep=huge, column sep=huge]
B
\arrow[r,"{\langle s, f \rangle}"]
&
I \times A
\arrow[d, equals]
\\
D_{\varphi}
\pullback
\arrow[r]
\arrow[d,"\varphi_1"']
\arrow[u,"\varphi_2"]
&
I \times A
\arrow[d,"{\mathrm{id}_I \times \varphi_0}"]
\\
D
\arrow[r,"{\langle u, g \rangle}"']
&
I \times C
\end{tikzcd}
\end{center}

It is easy to see that $\mathrm{id}_I \times \varphi_0$, $\varphi_1$ and $\varphi_2$ are morphisms over $I \times J$ and that the lower square of the above diagram truly is cartesian, so that $S(\varphi)$ is a morphism in $\mathbf{Poly}_{\mathcal{E} \slice{I \times J}}(1,1)$. Verifying functoriality and invertibility of $S$ is elementary but tedious.

That $S$ restricts to an isomorphism $S\cart : \mathbf{Poly}\cart_{\mathcal{E}}(I,J) \to \mathbf{Poly}\cart_{\mathcal{E}\slice{I \times J}}(1,1)$ is immediate, since $S(\varphi)$ is cartesian if and only if $\varphi_2$ is invertible, which holds if and only if $\varphi$ is cartesian.
\end{proof}

%%%%%%%%%%%%%%%%%%%%%%%%%%%%%%%%%%%%%%%%%%%%%%%%%%%
%%
%% Review of natural models
%%
%%%%%%%%%%%%%%%%%%%%%%%%%%%%%%%%%%%%%%%%%%%%%%%%%%%
\newpage
\section{Review of natural models}
\label{secNaturalModels}

Natural models \cite{Awodey2016} are a notion equivalent to that of categories with families \cite{Dybjer1995} which provide a natural setting for interpreting type theory (see also \cite{Fiore2012}).

\begin{definition}
\label{defNaturalModel}
A \textbf{natural model} is a category $\mathbb{C}$ together with the following data:
\begin{itemize}
\item A terminal object $\diamond$;
\item A map of presheaves $p : \dot{\mathcal{U}} \to \mathcal{U}$ over $\mathbb{C}$;
\item \textit{Representability data.} For each object $\Gamma$ of $\mathbb{C}$ and each element $A \in \mathcal{U}(\Gamma)$, an object $\Gamma.A$, a morphism $\mathsf{p}_A : \Gamma.A \to \Gamma$ in $\mathbb{C}$ and an element $\mathsf{q}_A \in \dot{\mathcal{U}}(\Gamma)$, such that for all $\Gamma$ and all $A$, the following square is a pullback:
\begin{center}
\begin{tikzcd}[column sep=huge, row sep=huge]
\mathsf{y}(\Gamma.A)
\arrow[r, "\mathsf{q}_A"]
\arrow[d, "\mathsf{y}(\mathsf{p}_A)"']
&
\dot{\mathcal{U}}
\arrow[d, "p"]
\\
\mathsf{y}(\Gamma)
\arrow[r, "A"']
&
\mathcal{U}
\end{tikzcd}
\eqpunct{.}
\end{center}
Here $\mathsf{y} : \mathbb{C} \to \widehat{\mathbb{C}} = \mathbf{Set}^{\mathbb{C}^{\mathrm{op}}}$ is the Yoneda embedding, and we have identified $A$ and $\mathsf{q}_A$ with the corresponding natural transformations by the Yoneda lemma.
\end{itemize}
\end{definition}

Definition \ref{defNaturalModel} says essentially that a natural model is a category equipped with a terminal object and a \textit{representable natural transformation} $p : \dot{\mathcal{U}} \to \mathcal{U}$, but we include in the definition the data witnessing the representability of $p$.

In what follows, we will write $\nameof{A}$ to denote the fibre $\dot{\mathcal{U}}_A$ of $p$ over $A:\mathcal{U}$ in the internal langauge of $\widehat{\mathbb{C}}$.

\begin{remark}
Regarding $\mathbb{C}$ as a category of \textit{contexts} and \textit{substitutions} of dependent type theory \cite{MartinLof1984}, we can regard $\mathcal{U}$ as a \textit{presheaf of types}, $\dot{\mathcal{U}}$ as a \textit{presheaf of terms} and $p : \dot{\mathcal{U}} \to \mathcal{U}$ as the map sending a term to its unique type. Specifically, the elements of the set $\mathcal{U}(\Gamma)$ are the types $A$ in context $\Gamma$, and the elements $a \in \dot{\mathcal{U}}(\Gamma)$ with $p_{\Gamma}(a)=A$ are the terms $a$ of type $A$ in context $\Gamma$. Representability of $p$ allows us to form the \textit{context extension} $\Gamma.A$ of a context $\Gamma$ by a type $A$ in context $\Gamma$. This is explained in detail in \cite{Awodey2016}; a treatment of signatures for dependent type theory in a natural model is forthcoming work.
\end{remark}

In \cite{Awodey2016}, necessary and sufficient conditions are given for a natural model to support a unit type, dependent sum types and dependent product types, as summarised in the following.

\begin{theorem}
\label{thmSupportingUnitSigmaPi}
Let $\mathsf{C} = (\mathbb{C},p)$ be a natural model.
\begin{enumabc}
\item $\mathsf{C}$ supports a unit type if and only if there exist morphisms $\widehat{\mathbf{1}}$ and $\widehat{\star}$ fitting into the following pullback square:
\begin{center}
\begin{tikzcd}[row sep=huge, column sep=huge]
1
\arrow[d, equals]
\arrow[r, "\widehat{\star}"]
\pullback
&
\dot{\mathcal{U}}
\arrow[d, "p"]
\\
1
\arrow[r, "\widehat{\mathbf{1}}"']
&
\mathcal{U}
\end{tikzcd}
\eqpunct{.}
\end{center}

\item $\mathsf{C}$ supports dependent sum types ($\Sigma$-types) if and only if there exist morphisms $\widehat{\Sigma}$ and $\widehat{\mathsf{pair}}$ fitting into the following pullback square:
\begin{center}
\begin{tikzcd}[row sep=huge, column sep=huge]
\sum_{(A,B) : \sum_{A:\mathcal{U}} \mathcal{U}^{[A]}} \sum_{a : \nameof{A}} \nameof{B(a)}
\arrow[d, "\pi"']
\arrow[r, "\widehat{\mathsf{pair}}"]
\pullbackc{dr}{-0.05}
&
\dot{\mathcal{U}}
\arrow[d, "p"]
\\
\sum_{A : \mathcal{U}} \mathcal{U}^{\nameof{A}}
\arrow[r, "\widehat{\Sigma}"']
&
\mathcal{U}
\end{tikzcd}
\eqpunct{.}
\end{center}
where $\pi$ is the projection morphism;

\item $\mathsf{C}$ supports dependent product types ($\Pi$-types) if and only if there exist morphisms $\widehat{\Pi}$ and $\widehat{\lambda}$ fitting into the following pullback square:
\begin{center}
\begin{tikzcd}[row sep=huge, column sep=huge]
\sum_{A : \mathcal{U}} \dot{\mathcal{U}}^{\nameof{A}}
\arrow[d, "p'"']
\arrow[r, "\widehat{\lambda}"]
\pullbackc{dr}{-0.05}
&
\dot{\mathcal{U}}
\arrow[d, "p"]
\\
\sum_{A : \mathcal{U}} \mathcal{U}^{\nameof{A}}
\arrow[r, "\widehat{\Pi}"']
&
\mathcal{U}
\end{tikzcd}
\eqpunct{.}
\end{center}
where $p' = \sum_{A:\mathcal{U}} p^{[A]}$.
\end{enumabc}
\end{theorem}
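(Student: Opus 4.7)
The plan is to treat each of the three parts separately, since they share a common structure: translate the formation, introduction, and strong elimination (or $\eta$) rules of the relevant type former into data and conditions on the stated square. The unifying tool is the Yoneda lemma together with an unfolding of the relevant presheaves in the internal language of $\widehat{\mathbb{C}}$, exploiting the representability condition on $p$ to identify the fibre $\nameof{A}$ with $\mathsf{y}(\Gamma.A)$ whenever $A \in \mathcal{U}(\Gamma)$.

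For part (a), I would first use Yoneda to identify a morphism $\widehat{\mathbf{1}} : 1 \to \mathcal{U}$ with a closed type (a choice of type in each context, stable under reindexing) and a morphism $\widehat{\star} : 1 \to \dot{\mathcal{U}}$ with a closed term. Commutativity of the square then expresses the typing judgement $\vdash \star : \mathbf{1}$, giving the formation and introduction rules. The pullback condition says precisely that, in every context $\Gamma$, the fibre of $p_\Gamma$ over $\mathbf{1}_\Gamma$ is a singleton containing $\star_\Gamma$; this is the uniqueness (or $\eta$) rule that completes the specification of the unit type.

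For parts (b) and (c), I would unfold the presheaves in the internal language: a generalised element of $\sum_{A:\mathcal{U}} \mathcal{U}^{\nameof{A}}$ over $\Gamma$ is a pair $(A,B)$ with $A$ a type in $\Gamma$ and $B$ a type family over $A$ (this uses representability of $p$ to reindex along the display map $\mathsf{p}_A$); a generalised element of $\sum_{(A,B)} \sum_{a : \nameof{A}} \nameof{B(a)}$ carries additionally a term $a : A$ and a term $b : B(a)$; and a generalised element of $\sum_{A:\mathcal{U}} \dot{\mathcal{U}}^{\nameof{A}}$ over a type family $(A,B)$ is a dependent function $\lambda a.\, f(a)$ with $f(a) : B(a)$. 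Under these identifications, $\widehat{\Sigma}$ and $\widehat{\Pi}$ are the formation rules, and $\widehat{\mathsf{pair}}$ and $\widehat{\lambda}$ the introduction rules; the squares commute exactly when these introduction forms have the correct types; and each square is a pullback precisely when the corresponding introduction rule is a bijection onto the terms of the formed type, i.e.\ when the strong $\eta$-rule for $\Sigma$-types or $\Pi$-types holds.

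The main obstacle I expect is not the categorical translation itself, which is essentially a bookkeeping exercise, but the verification that the pullback condition encodes the \emph{strong} elimination / $\eta$ rule rather than a weaker variant (e.g.\ a propositional $\eta$-rule, or first/second projections without uniqueness). The cleanest way to settle this is to compute the pullback of $p$ along $\widehat{\Sigma}$ (respectively $\widehat{\Pi}$) in the internal language of $\widehat{\mathbb{C}}$, observe that its elements over $\Gamma$ are exactly the pairs consisting of a $\Sigma$- (resp.\ $\Pi$-) type and a term thereof, and then match this with the codomain of $\widehat{\mathsf{pair}}$ (resp.\ $\widehat{\lambda}$) interpreted as a map on generalised elements; bijectivity in every context is then manifestly the strong $\eta$-rule.
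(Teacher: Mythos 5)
The paper does not prove this theorem itself: it is quoted as a summary of results established in \cite{Awodey2016}, to which the reader is referred. Your sketch follows essentially the same route as that reference---using Yoneda and the representability of $p$ to read the horizontal maps as formation and introduction rules stable under substitution, and identifying the pullback condition with the strong elimination/$\eta$-rules---so it is correct and matches the intended argument.
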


Since $p : \dot{\mathcal{U}} \to \mathcal{U}$ is a morphism in $\widehat{\mathbb{C}}$, which is a locally cartesian closed category, it can be viewed as a polynomial $1 \pto 1$, where $1 = \mathsf{y}(\diamond)$ is our choice of terminal object in $\widehat{\mathbb{C}}$. Furthermore, observe that the morphism $\pi$ in (b) is the polynomial composite $p \cdot p$, and the morphism $p'$ in (c) is $P_p(p)$, where $P_p$ is the extension of $p$. We can therefore rephrase the statement of Theorem \ref{thmSupportingUnitSigmaPi} in terms of morphisms of polynomials.

\begin{theorem}
\label{thmUnitSigmaPiPoly}
Let $\mathsf{C} = (\mathbb{C},p)$ be a natural model.
\begin{enumabc}
\item $\mathsf{C}$ supports unit types if and only if there is a cartesian morphism $\eta : i_1 \pRightarrow p$ in $\mathbf{Poly}_{\widehat{\mathbb{C}}}$;
\item $\mathsf{C}$ supports dependent sum types if and only if there is a cartesian morphism $\mu : p \cdot p \pRightarrow p$ in $\mathbf{Poly}_{\widehat{\mathbb{C}}}$;
\item $\mathsf{C}$ supports dependent product types if and only if there is a cartesian morphism $\zeta : P_p(p) \pRightarrow p$ in $\mathbf{Poly}_{\widehat{\mathbb{C}}}$;
\end{enumabc}
\end{theorem}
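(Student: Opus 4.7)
The plan is to transport the three pullback-square criteria of Theorem~\ref{thmSupportingUnitSigmaPi} across the correspondence between pullback squares and cartesian morphisms of polynomials $1 \pto 1$ recorded in Remark~\ref{rmkCartesianMorphismIsPullbackSquare}. Since the polynomial $p : \dot{\mathcal{U}} \to \mathcal{U}$ is already viewed as $1 \pto 1$ in $\widehat{\mathbb{C}}$, and since the two source/target legs to $1$ are forced, a cartesian morphism between two such polynomials is uniquely determined by, and is uniquely determined by, a pullback square between their ``middle'' maps. So the whole content is to identify the middle maps of $i_1$, $p \cdot p$, and $P_p(p)$ with the domain/codomain data appearing in Theorem~\ref{thmSupportingUnitSigmaPi}.

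For part (a), the middle map of $i_1$ is $\mathrm{id}_1 : 1 \to 1$; hence a cartesian morphism $\eta : i_1 \pRightarrow p$ is, by Remark~\ref{rmkCartesianMorphismIsPullbackSquare}, exactly a pullback square of the form displayed in Theorem~\ref{thmSupportingUnitSigmaPi}(a) with top arrow $\widehat{\star}$ and bottom arrow $\widehat{\mathbf{1}}$, and conversely every such square yields a cartesian morphism.

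For part (b), I would apply the composition recipe of Definition~\ref{defPolynomialComposition} with $F = G = p$. Using Remark~\ref{rmkPolynomialCompositionExplicit} (with $I=J=K=1$, $C = A = \mathcal{U}$, $D = B = \dot{\mathcal{U}}$, and $u = p$) one reads off that the middle map of $p \cdot p$ is the projection
$$\sum_{(A,B) : \sum_{A:\mathcal{U}} \mathcal{U}^{[A]}} \sum_{a : [A]} [B(a)] \ \longrightarrow\ \sum_{A:\mathcal{U}} \mathcal{U}^{[A]},$$
which is precisely the morphism $\pi$ appearing in Theorem~\ref{thmSupportingUnitSigmaPi}(b). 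So a cartesian morphism $\mu : p \cdot p \pRightarrow p$ is exactly a pullback square as in (b) with top arrow $\widehat{\mathsf{pair}}$ and bottom arrow $\widehat{\Sigma}$.

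For part (c), I would unfold the extension $P_p = \Sigma_! \Pi_p \Delta_! : \widehat{\mathbb{C}} \to \widehat{\mathbb{C}}$ applied to the objects $\dot{\mathcal{U}}$ and $\mathcal{U}$, obtaining $P_p(\dot{\mathcal{U}}) = \sum_{A:\mathcal{U}} \dot{\mathcal{U}}^{[A]}$ and $P_p(\mathcal{U}) = \sum_{A:\mathcal{U}} \mathcal{U}^{[A]}$, with $P_p(p) = \sum_{A:\mathcal{U}} p^{[A]} = p'$. Viewing this morphism as a polynomial $1 \pto 1$, a cartesian morphism $\zeta : P_p(p) \pRightarrow p$ is, by the same remark, a pullback square as in Theorem~\ref{thmSupportingUnitSigmaPi}(c) with top arrow $\widehat{\lambda}$ and bottom arrow $\widehat{\Pi}$. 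Combining the three equivalences with Theorem~\ref{thmSupportingUnitSigmaPi} proves the claim.

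The main obstacle is really a bookkeeping one: confirming that the internal-language descriptions of $p \cdot p$ and $P_p(p)$ from Remark~\ref{rmkPolynomialCompositionExplicit} and Definition~\ref{defExtension} give, on the nose, the morphisms $\pi$ and $p'$ named in Theorem~\ref{thmSupportingUnitSigmaPi}. Once that identification is made, the translation between ``pullback square'' and ``cartesian morphism of polynomials $1 \pto 1$'' is immediate from Remark~\ref{rmkCartesianMorphismIsPullbackSquare}, and no additional work is needed.
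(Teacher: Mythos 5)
Your proposal is correct and follows exactly the route the paper takes: the paper's entire justification for this theorem is the paragraph preceding it, which observes that $\pi$ is the polynomial composite $p \cdot p$ and $p'$ is $P_p(p)$, and then invokes the correspondence between pullback squares and cartesian morphisms of polynomials $1 \pto 1$ from Remark~\ref{rmkCartesianMorphismIsPullbackSquare} to translate Theorem~\ref{thmSupportingUnitSigmaPi}. Your unfolding of the composite via Remark~\ref{rmkPolynomialCompositionExplicit} and of the extension $P_p$ simply makes explicit the bookkeeping the paper leaves to the reader.
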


We originally conjectured that $(1,p,\eta,\mu)$ is, moreover, a polynomial monad in the sense of Definition \ref{defPolynomialMonad}, and that $(p,\zeta)$ is an algebra for this monad in a suitable sense, but this turned out to be false. For example, consider the monad unit laws $\mu \circ (\eta \cdot p) = \mathrm{id}_p = \mu \circ (p \cdot \eta)$---they state precisely that the following equations of pasting diagrams hold:

\begin{center}
\begin{tikzcd}[column sep=normal, row sep=normal]
\dot{\mathcal{U}}
\arrow[d, "p"']
\arrow[r, "(\eta \cdot p)_1"]
&
\sum_{A,B} \sum_{a:\nameof{A}} \nameof{B(a)}
\arrow[d, "p \cdot p"]
\arrow[r, "\mu_1"]
&
\dot{\mathcal{U}}
\arrow[d, "p" name={domequalsone}]
&
\dot{\mathcal{U}}
\arrow[d,"p"' name={codequalsone}]
\arrow[r,equals]
\arrow[from={domequalsone},to={codequalsone},"=" description,draw=none]
&
\dot{\mathcal{U}}
\arrow[d,"p" name={domequalstwo}]
&
\dot{\mathcal{U}}
\arrow[d, "p"' name={codequalstwo}]
\arrow[r, "(p \cdot \eta)_1"]
\arrow[from={domequalstwo},to={codequalstwo},"=" description,draw=none]
&
\sum_{A,B} \sum_{a:\nameof{A}} \nameof{B(a)}
\arrow[d, "p \cdot p"]
\arrow[r, "\mu_1"]
&
\dot{\mathcal{U}}
\arrow[d, "p"]
\\
\mathcal{U}
\arrow[r, "(\eta \cdot p)_0"']
&
\sum_{A:\mathcal{U}} \mathcal{U}^{\nameof{A}}
\arrow[r, "\mu_0"']
&
\mathcal{U}
&
\mathcal{U}
\arrow[r,equals]
&
\mathcal{U}
&
\mathcal{U}
\arrow[r, "(p \cdot \eta)_0"']
&
\sum_{A:\mathcal{U}} \mathcal{U}^{\nameof{A}}
\arrow[r, "\mu_0"']
&
\mathcal{U}
\end{tikzcd}
\eqpunct{.}
\end{center}

However, these equations do not hold strictly. Indeed, in the internal language of $\widehat{\mathbb{C}}$, we have
$$(\mu \circ (\eta \cdot p))_0(A) = \sum_{x : A} \mathbf{1} = A \times \mathbf{1} \quad \text{and} \quad (\mu \circ (p \cdot \eta))_0(A) = \sum_{x : \mathbf{1}} A = \mathbf{1} \times A
\eqpunct{.}$$
But in type theory, the types $A \times \mathbf{1}$, $A$ and $\mathbf{1} \times A$ are not equal, although there are canonical isomorphisms between them. We therefore cannot, in general, expect the monad laws to hold strictly. However, it is still reasonable to expect this structure to satisfy the laws of a \textit{pseudomonad}. As such, we require a suitable notion of equivalence between morphisms of polynomials---however, this is not currently available to us, since $\mathbf{Poly}_{\widehat{\mathbb{C}}}$ is merely a bicategory.

In Section \ref{secTricategory}, we will equip $\mathbf{Poly}_{\mathcal{E}}$ with $3$-cells, endowing it with the structure of a $\mathbf{2Cat}$-enriched bicategory, which is a kind of tricategory with strict composition in dimension $2$. This affords us the ability to show that we have a \textit{polynomial pseudomonad} and a \textit{polynomial pseudoalgebra}---proving this is then the content of Section \ref{secMonadAlgebra}.

%%%%%%%%%%%%%%%%%%%%%%%%%%%%%%%%%%%%%%%%%%%%%%%%%%%
%%
%% A tricategory of polynomials
%%
%%%%%%%%%%%%%%%%%%%%%%%%%%%%%%%%%%%%%%%%%%%%%%%%%%%
\newpage
\section{Polynomial pseudomonads and pseudoalgebras}
\label{secTricategory}

Much as monads naturally live in bicategories, pseudomonads naturally live in \textit{tricategories}. To define the notion of a polynomial pseudomonad, we therefore need to endow the bicategory $\mathbf{Poly}\cart_{\mathcal{E}}$ with $3$-cells turning it into a tricategory.

\subsection{A tricategory of polynomials}

In general, tricategories are fiddly, with lots of coherence data to worry about \cite{Gurski2013}---fortunately for us, our situation is simplified by the fact that composition of 2-cells of polynomials is strict, so that the $3$-cells turn the hom categories $\mathbf{Poly}_{\mathcal{E}}(I,J)$ into $2$-categories, rather than bicategories. The emerging structure is that of a \textit{$\mathbf{2Cat}$-enriched bicategory}.

\begin{definition}
\label{def2CatEnrichedBicategory}
A \textbf{$\mathbf{2Cat}$-enriched bicategory} $\mathfrak{B}$ consists of:
\begin{itemize}
\item A set $\mathfrak{B}_0$, whose elements we call the \textbf{0-cells} of $\mathfrak{B}$;
\item For all 0-cells $I,J$, a 2-category $\mathfrak{B}(I,J)$, whose 0-cells, 1-cells and 2-cells we call the \textbf{1-cells}, \textbf{2-cells} and \textbf{3-cells} of $\mathfrak{B}$, respectively;
\item For all 0-cells $I,J,K$, a 2-functor $\circ_{I,J,K} : \mathfrak{B}(J,K) \times \mathfrak{B}(I,J) \to \mathfrak{B}(I,K)$, which we call the \textbf{composition} 2-functor;
\item For all 0-cells $I$, a 2-functor $\iota_I : \mathbf{1} \to \mathfrak{B}(I,I)$, which we call the \textbf{identity} 2-functor, where $\mathbf{1}$ is the terminal 2-category;
\item For all 0-cells $I,J,K,L$, a 2-natural isomorphism
\begin{center}
\begin{tikzcd}[column sep=huge, row sep=huge]
\mathfrak{B}(K,L) \times \mathfrak{B}(J,K) \times \mathfrak{B}(I,J)
\arrow[r, "\circ_{J,K,L} \times \mathrm{id}"]
\arrow[d, "\mathrm{id} \times \circ_{I,J,K}"']
&
\mathfrak{B}(J,L) \times \mathfrak{B}(I,J)
\arrow[d, "\circ_{I,J,L}"]
\arrow[dl, draw=none, pos=0.5, "\phantom{\alpha_{I,J,K,L}}\ \twocell{225}\ \alpha_{I,J,K,L}" description]
\\
\mathfrak{B}(K,L) \times \mathfrak{B}(I,K)
\arrow[r, "\circ_{I,K,L}"']
&
\mathfrak{B}(I,L)
\end{tikzcd}
\eqpunct{,}
\end{center}
called the \textbf{associator};

\item For all 0-cells $I,J$, 2-natural isomorphisms

\begin{center}
\begin{tikzcd}[row sep=huge, column sep={20pt}]
\mathfrak{B}(I,J) \times \mathbf{1}
\arrow[r, "\mathrm{id} \times \iota_I"]
\arrow[dr, "\cong"']
&
\mathfrak{B}(I,J) \times \mathfrak{B}(I,I)
\arrow[d, "\circ_{I,I,J}"]
\arrow[dl, draw=none, pos=0.2, "\lambda_{I,J} \twocell{225} \phantom{\lambda_{I,J}}" description]
\\
~
&
\mathfrak{B}(I,J)
&
\end{tikzcd}
\begin{tikzcd}[row sep=huge, column sep={20pt}]
\mathfrak{B}(J,J) \times \mathfrak{B}(I,J)
\arrow[d, "\circ_{I,J,J}"']
\arrow[dr, draw=none, pos=0.2, "\rho_{I,J} \twocell{315} \phantom{\rho_{I,J}}" description]
&
\mathbf{1} \times \mathfrak{B}(I,J)
\arrow[l, "\iota_J \times \mathrm{id}"']
\arrow[dl, "\cong"]
\\
\mathfrak{B}(I,J)
&
~
\end{tikzcd}
\eqpunct{,}
\end{center}
called the \textbf{left unitor} and \textbf{right unitor}, respectively.
\end{itemize}
such that for all compatible 1-cells $I \xrightarrow{f} J \xrightarrow{g} K \xrightarrow{h} L \xrightarrow{k} M$, the following diagrams commute:
\begin{center}
\begin{tikzcd}[row sep=huge, column sep={0pt}]
((k \circ h) \circ g) \circ f
\arrow[rr, Rightarrow, "\alpha_{I,J,K,M}"]
\arrow[dr, Rightarrow, "\alpha_{J,K,L,M} \circ f"']
&&
(k \circ h) \circ (g \circ f)
\arrow[rr, Rightarrow, "\alpha_{I,K,L,M}"]
&&
k \circ (h \circ (g \circ f))
\\
&
(k \circ (h \circ g)) \circ f
\arrow[rr, Rightarrow, "\alpha_{I,J,L,M}"']
&&
k \circ ((h \circ g) \circ f)
\arrow[ur, Rightarrow, "k \circ \alpha_{I,J,K,L}"']
&
\end{tikzcd}
\eqpunct{,}
\end{center}

\begin{center}
\begin{tikzcd}[row sep=huge, column sep=huge]
(g \circ \iota_J) \circ f
\arrow[rr, Rightarrow, "\alpha_{I,J,J,K}"]
\arrow[dr, Rightarrow, "\lambda_{J,K} \circ f"']
&&
g \circ (\iota_J \circ f)
\arrow[dl, Rightarrow, "g \circ \rho_{I,J}"]
\\
&
g \circ f
&
\end{tikzcd}
\eqpunct{.}
\end{center}
\end{definition}

\begin{remark}
Every 3-category is trivially a $\mathbf{2Cat}$-enriched bicategory, and every $\mathbf{2Cat}$-enriched bicategory is a tricategory. Every $\mathbf{2Cat}$-enriched bicategory has an underlying bicategory, obtained by forgetting the 3-cells, and every bicategory can be equipped with the structure of a $\mathbf{2Cat}$-enriched bicategory by taking only identities as 3-cells. An equivalent viewpoint is that $\mathbf{2Cat}$-enriched bicategories are tricategories, whose hom-bicategories are $2$-categories and whose coherence isomorphisms in the top dimension are identities.
\end{remark}

Connections between polynomials and $\mathbf{2Cat}$-enriched bicategories have been studied in different but related settings by Tamara von Glehn \cite{vonGlehn2015} and by Mark Weber \cite{Weber2015} (the latter referring to them as `2-bicategories').

In order to motivate our definition of 3-cells, we make an observation relating polynomials with internal categories. First, we recall (e.g.\ \cite{Jacobs2001}) the definition of the \textit{internal full subcategory} associated with a morphism in a locally cartesian closed category.

\begin{definition}
\label{defInternalFullSubcategory}
Let $f : B \to A$ be a morphism in a locally cartesian closed category $\mathcal{E}$. The \textbf{internal full subcategory} of $\mathcal{E}$ associated with $f$ is the internal category $\mathbb{A}_f$ whose object of objects is $A$ and whose object of morphisms is $\sum_{a,a' \in A} B_{a'}^{B_a}$, with the projections $\partial_0,\partial_1$ to $A$ giving the domain and codomain morphisms, and with identity and composition morphisms defined in the obvious way.

Explicitly, the morphism $\partial = \langle \partial_0, \partial_1 \rangle : (\mathbb{A}_f)_1 \to A \times A$ is defined to be the exponential object $f_2^{f_1}$ in $\mathcal{E}\slice{A \times A}$, where $f_1$ and $f_2$ are the given by pulling back $f$ along the projections $\pi_1,\pi_2 : A \times A \rightrightarrows A$.
\end{definition}

\begin{theorem}
\label{thmPolynomialsYieldInternalCategories}
Fix objects $I$ and $J$ in a locally cartesian closed category $\mathcal{E}$. There is a functor
$$\mathbb{A}_{(-)} : \mathbf{Poly}\cart_{\mathcal{E}}(I,J) \to \mathbf{Cat}(\mathcal{E} \slice{I \times J})
\eqpunct{.}$$
Moreover, every functor of the form $\mathbb{A}_{\varphi}$ is full and faithful.
\end{theorem}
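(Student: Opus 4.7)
My plan is to first use Lemma \ref{lemPolynomialsFromOneToOne} to reduce to the case $I = J = 1$. Given a polynomial $F : \declpoly IBAJsft$, its image $S(F) = \langle s, f\rangle : B \to I \times A$ is a morphism in $\mathcal{E}\slice{I \times J}$, so I define $\mathbb{A}_F$ to be the internal full subcategory of $\mathcal{E}\slice{I \times J}$ associated with $S(F)$ in the sense of Definition \ref{defInternalFullSubcategory}. A cartesian morphism $\varphi : F \pRightarrow G$ passes through $S\cart$ to a cartesian 2-cell between polynomials $1 \pto 1$ in $\mathcal{E}\slice{I \times J}$, so it suffices to construct the action of $\mathbb{A}_{(-)}$ on pullback squares in $\mathcal{E}$ between morphisms $f : B \to A$ and $g : D \to C$.

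Next, given such a $\varphi$ with legs $\varphi_0 : A \to C$ and $\varphi_1 : B \to D$ as in Remark \ref{rmkCartesianMorphismIsPullbackSquare}, I note that $\varphi_1$ restricts to an isomorphism $B_a \xrightarrow{\cong} D_{\varphi_0(a)}$ on each fibre. I then define $\mathbb{A}_\varphi$ to act as $\varphi_0$ on objects, and to send a morphism $(a, a', t : B_a \to B_{a'})$ of $\mathbb{A}_f$ to $(\varphi_0(a), \varphi_0(a'), (\varphi_1)_{a'} \circ t \circ (\varphi_1)_a^{-1})$. Preservation of identities and composition in $\mathbb{A}_f$ is then immediate from the functoriality of conjugation by an isomorphism, as is functoriality of $\mathbb{A}_{(-)}$ in $\varphi$, since the fibrewise isomorphisms compose as expected under vertical composition of cartesian 2-cells.

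For the second claim, I must show that the induced square
\begin{center}
\begin{tikzcd}[row sep=large, column sep=large]
(\mathbb{A}_f)_1 \arrow[r] \arrow[d, "\partial"'] \pullback & (\mathbb{A}_g)_1 \arrow[d, "\partial"] \\
A \times A \arrow[r, "\varphi_0 \times \varphi_0"'] & C \times C
\end{tikzcd}
\end{center}
is a pullback. Since $\varphi$ is cartesian, $f$ is itself a pullback of $g$ along $\varphi_0$; pulling back further along $\pi_1, \pi_2 : A \times A \rightrightarrows A$ shows that each $f_i$ is a pullback of $g_i$ along $\varphi_0 \times \varphi_0$, and since exponentials in a locally cartesian closed category are stable under pullback of the base, the exponential $f_2^{f_1} = (\mathbb{A}_f)_1$ is the pullback of $g_2^{g_1} = (\mathbb{A}_g)_1$ along $\varphi_0 \times \varphi_0$, as desired. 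The main obstacle I anticipate is not this pullback argument itself but the coherence check it leaves behind: one must verify that the abstractly-obtained horizontal morphism in the above square agrees with the concrete one arising from the construction of $\mathbb{A}_\varphi$ on morphisms, so that the square really witnesses fullness and faithfulness of the functor we have just built. This should reduce to a fibrewise identification of \emph{conjugation by $(\varphi_1)_a$} with the canonical base-change isomorphism coming from the pullback-stability of exponentials, readable off from the internal language.
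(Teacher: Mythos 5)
Your proposal is correct and follows essentially the same route as the paper: reduce to $I=J=1$ via Lemma \ref{lemPolynomialsFromOneToOne}, take $\varphi_0$ on objects, and obtain full-and-faithfulness from the fact that pullback along $\varphi_0\times\varphi_0$ preserves exponentials, so that the square relating $f_2^{f_1}$ and $g_2^{g_1}$ is cartesian. The only (cosmetic) difference is that the paper \emph{defines} $(\mathbb{A}_\varphi)_1$ as the canonical map in that pullback square, which absorbs the coherence check you flag at the end into the definition rather than leaving it as a separate verification.
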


\begin{proof}
We assume $I=J=1$, letting Lemma \ref{lemPolynomialsFromOneToOne} take care of the general case.

Given a morphism $f : B \to A$ of $\mathcal{E}$, let $\mathbb{A}_f$ be the internal full subcategory of $\mathcal{E}$ associated with $f$ (as in Definition \ref{defInternalFullSubcategory}).

Given a cartesian morphism of polynomials $\varphi : f \pRightarrow g$, represented by the following pullback square:
\begin{center}
\begin{tikzcd}[column sep=huge, row sep=huge]
B
\arrow[r, "\varphi_1"]
\arrow[d, "f"']
\pullback
&
D
\arrow[d, "g"]
\\
A
\arrow[r, "\varphi_0"']
&
C
\end{tikzcd}
\eqpunct{,}
\end{center}
let $\mathbb{A}_{\varphi} : \mathbb{A}_f \to \mathbb{A}_g$ be the internal functor defined as follows. The action of $\mathbb{A}_{\varphi}$ on objects is given by $\varphi_0 : A \to C$. Since $f \cong \Delta_{\varphi}(g)$ and pullbacks preserve exponentials in locally cartesian closed categories, it follows that $f_2^{f_1} \cong \Delta_{\varphi_0 \times \varphi_0}(g_2^{g_1})$. This determines a canonical morphism $(\mathbb{A}_{\varphi})_1 : (\mathbb{A}_f)_1 \to (\mathbb{A}_g)_1$, as in the following pullback square:
\begin{center}
\begin{tikzcd}[row sep=huge, column sep=huge]
\sum_{a,a' \in A} B_{a'}^{B_a}
\arrow[r, dashed, "(\mathbb{A}_{\varphi})_1"]
\arrow[d, "f_2^{f_1}"']
\pullbackc{dr}{-0.05}
&
\sum_{c,c' \in C} D_{c'}^{D_c}
\arrow[d, "g_2^{g_1}"]
\\
A \times A
\arrow[r, "\varphi_0 \times \varphi_0"']
&
C \times C
\end{tikzcd}
\eqpunct{.}
\end{center}
It is easy to verify that $\mathbb{A}_{\varphi}$ is an internal functor and that $\mathbb{A}_{\psi \circ \varphi} = \mathbb{A}_{\psi} \circ \mathbb{A}_{\varphi}$ for all composable pairs of cartesian morphisms $\varphi,\psi$. The fact that $\mathbb{A}_{\varphi}$ is full and faithful is expressed precisely by the fact that the square defining $(\mathbb{A}_{\varphi})_1$ is cartesian.
\end{proof}

\begin{remark}
Theorem \ref{thmPolynomialsYieldInternalCategories} yields a 1-functor between 1-categories. However, $\mathbf{Cat}(\mathcal{E} \slice{I \times J})$ has the structure of a 2-category, so it is therefore reasonable to expect that when we equip $\mathbf{Poly}_{\mathcal{E}}$ with 3-cells, the functor $\mathbb{A}_{(-)}$ should extend to a 2-functor. In particular, any 3-cell between cartesian morphisms of polynomials should induce an internal natural transformation between the induced internal functors. However, since the association of internal functors to morphisms of polynomials works only for \textit{cartesian} morphisms of polynomials, we cannot simply take internal natural transformations as the 3-cells of $\mathbf{Poly}_{\mathcal{E}}$. Lemmas \ref{lemHalfInternalNT} and \ref{lemCartesianAdjustmentsAreInternalNT} provide a correspondence between internal natural transformations $\mathbb{A}_{\varphi} \Rightarrow \mathbb{A}_{\psi}$ and particular morphisms of $\mathcal{E}$ in a way that generalises to the case when $\varphi$ and $\psi$ are not required to be cartesian.
\end{remark}

\begin{lemma}
\label{lemHalfInternalNT}
Let $f : B \to A$ and $g : D \to C$ be polynomials in a locally cartesian closed category $\mathcal{E}$ and let $\varphi, \psi : f \pRightarrow g$ be cartesian morphisms of polynomials. There is a bijection between the set of morphisms $\alpha : \Delta_{\varphi_0} D \to \Delta_{\psi_0} D$ in $\mathcal{E} \slice{A}$ and the set of morphisms $\widehat{\alpha} : A \to (\mathbb{A}_g)_1$ in $\mathcal{E} \slice{C \times C}$, as indicated by dashed arrows in the following diagrams:
\begin{center}
\begin{tikzcd}[row sep=small, column sep=small]
\Delta_{\varphi} D
\arrow[rr, dashed, blue, "\alpha"]
\arrow[d, "\varphi_2"', "{\scriptsize \cong}"]
&&
\Delta_{\psi_0} D
\arrow[d, "\psi_2", "{\scriptsize \cong}"']
\\
B
\arrow[dr, "f"', bend right]
&&
B
\arrow[dl, "f", bend left]
\\
&
A
&
\end{tikzcd}
\hspace{30pt}
\begin{tikzcd}[row sep=huge, column sep=huge]
&
(\mathbb{A}_g)_1
\arrow[d, "\partial"]
\\
A
\arrow[ur, dashed, blue, "\widehat{\alpha}"]
\arrow[r, "{\langle \varphi_0, \psi_0 \rangle}"']
&
C \times C
\end{tikzcd}
\eqpunct{,}
\end{center}
where $\varphi_2, \psi_2$ are the canonical isomorphisms induced by the universal property of pullbacks, as in Remark \ref{rmkCartesianMorphismIsPullbackSquare}.
\end{lemma}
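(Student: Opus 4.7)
The plan is to derive this bijection as an instance of the exponential adjunction in $\mathcal{E}\slice{C \times C}$ combined with the pseudofunctoriality of pullback. Recall from Definition \ref{defInternalFullSubcategory} that $(\mathbb{A}_g)_1$ is the exponential object $g_2^{g_1}$ in $\mathcal{E}\slice{C \times C}$, with structure map $\partial = \langle \partial_0, \partial_1 \rangle$, and that $g_i = \Delta_{\pi_i}(g)$ for the projections $\pi_1, \pi_2 : C \times C \rightrightarrows C$. Viewing $A$ as an object of $\mathcal{E}\slice{C \times C}$ via $\langle \varphi_0, \psi_0 \rangle$, the universal property of the exponential in this slice yields a natural bijection between morphisms $\widehat\alpha : A \to g_2^{g_1}$ over $C \times C$ and morphisms $\Delta_{\langle \varphi_0, \psi_0 \rangle}(g_1) \to \Delta_{\langle \varphi_0, \psi_0 \rangle}(g_2)$ in $\mathcal{E}\slice{A}$.

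Next I would invoke the canonical isomorphism $\Delta_{\langle \varphi_0, \psi_0 \rangle} \Delta_{\pi_i} \cong \Delta_{\pi_i \circ \langle \varphi_0, \psi_0 \rangle}$ arising from the pseudofunctoriality of pullback. Applied to $g$, this gives identifications $\Delta_{\langle \varphi_0, \psi_0 \rangle}(g_1) \cong \Delta_{\varphi_0}(D)$ and $\Delta_{\langle \varphi_0, \psi_0 \rangle}(g_2) \cong \Delta_{\psi_0}(D)$ in $\mathcal{E}\slice{A}$. Composing the exponential adjunction with these identifications yields the required bijection between morphisms $\widehat\alpha$ and morphisms $\alpha : \Delta_{\varphi_0} D \to \Delta_{\psi_0} D$ over $A$.

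The cartesian hypothesis on $\varphi$ and $\psi$ plays no role in the bijection itself; it enters only through the left-hand diagram in the statement, where the isomorphisms $\varphi_2, \psi_2$ from Remark \ref{rmkCartesianMorphismIsPullbackSquare} identify both $\Delta_{\varphi_0} D$ and $\Delta_{\psi_0} D$ with $B$. No serious obstacle is anticipated: the whole argument is a direct unfolding of the definition of the exponential in a slice of a locally cartesian closed category together with the standard pullback-of-a-pullback identity. The only bookkeeping to watch is keeping track of the $\pi_1$ versus $\pi_2$ components, so that $\widehat\alpha$ genuinely lands over $C \times C$ with the prescribed structure map $\langle \varphi_0, \psi_0 \rangle$, and that the resulting $\alpha$ is a morphism in $\mathcal{E}\slice{A}$ rather than merely in $\mathcal{E}$.
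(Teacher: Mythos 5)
Your argument is correct and is essentially the paper's own proof read in the opposite direction: the paper transposes $\alpha$ in $\mathcal{E}\slice{A}$ to a section of the pullback of $\partial$ along $\langle\varphi_0,\psi_0\rangle$ (using that pullback preserves exponentials) and then identifies sections of the pullback with diagonal fillers, which is exactly your combination of the exponential universal property in $\mathcal{E}\slice{C\times C}$ with the pasting identity $\Delta_{\langle\varphi_0,\psi_0\rangle}\Delta_{\pi_i}\cong\Delta_{\pi_i\circ\langle\varphi_0,\psi_0\rangle}$. Your observation that cartesianness of $\varphi,\psi$ is not needed for the bijection itself also matches the paper, which only uses $\varphi_2,\psi_2$ to relate the statement to $B$.
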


\begin{proof}
Given $\alpha : \Delta_{\varphi_0} D \to \Delta_{\psi_0} D$ in $\mathcal{E} \slice{A}$, the exponential transpose of $\alpha$ in $\mathcal{E} \slice{A}$ is, as a morphism in $\mathcal{E}$, a section $\overline{\alpha} : A \to H$ of the projection $H \to A$, where $H = \sum_{a \in A} D_{\psi_0(a)}^{D_{\varphi_0(a)}}$. This projection is precisely the pullback of $(\mathbb{A}_g)_1 \to C \times C$ along $\langle \varphi_0, \psi_0 \rangle$, as  illustrated in the following diagram:
\begin{center}
\begin{tikzcd}[row sep=huge, column sep=huge]
H
\arrow[d]
\arrow[r]
\pullback
&
(\mathbb{A}_g)_1
\arrow[d, "\partial"]
\\
A
\arrow[r, "{\langle \varphi_0, \psi_0 \rangle}"']
\arrow[u, bend left, dashed, blue, "\overline{\alpha}"]
&
C \times C
\end{tikzcd}
\eqpunct{.}
\end{center}
But sections of the pullback correspond with diagonal fillers $\widehat{\alpha} : A \to (\mathbb{A}_g)_1$ of the pullback square. This is as required, since such a filler making the lower triangle commute makes the upper triangle commute automatically. This concludes the proof of (a).
\end{proof}

\begin{lemma}
\label{lemCartesianAdjustmentsAreInternalNT}
Let $f : B \to A$ and $g : D \to C$ be polynomials in a locally cartesian closed category $\mathcal{E}$, let $\varphi,\psi : f \pRightarrow g$ be cartesian morphisms of polynomials, and let $\alpha,\widehat{\alpha}$ be as in Lemma \ref{lemHalfInternalNT}. The following are equivalent:
\begin{enumrom}
\item $\widehat{\alpha}$ is an internal natural transformation $\mathbb{A}_{\varphi} \Rightarrow \mathbb{A}_{\psi}$;
\item In the internal language of $\mathcal{E}$, we have $\seqbn{\mathbb{A}_{\psi}(k) \circ \alpha_a = \alpha_{a'} \circ \mathbb{A}_{\varphi}(k)}{a,a' \in A,\ k \in B_{a'}^{B_a}}$;
\item In the internal language of $\mathcal{E}$, we have $\seqbn{\gamma_{a'} \circ k = k \circ \gamma_a}{a,a' \in A,\ k \in B_{a'}^{B_a}}$, where $\gamma = \psi_2 \circ \alpha \circ \varphi_2^{-1} : B \to B$;
\item $\alpha$ is a morphism in $\mathcal{E} \slice{B}$, i.e.\ $\psi_2 \circ \alpha = \varphi_2$.
\end{enumrom}
\end{lemma}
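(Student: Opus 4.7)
The plan is to establish the chain of equivalences (i) $\Leftrightarrow$ (ii) $\Leftrightarrow$ (iii) $\Leftrightarrow$ (iv) separately, with the first three being essentially an unpacking of definitions and an algebraic rearrangement, and the last requiring a genuine argument.

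For (i) $\Leftrightarrow$ (ii), I would observe that under the bijection of Lemma \ref{lemHalfInternalNT} the $a$-component $\widehat{\alpha}_a$ corresponds to the restricted morphism $\alpha_a : D_{\varphi_0(a)} \to D_{\psi_0(a)}$ in $\mathcal{E}$, and the source/target conditions $\partial_0 \circ \widehat{\alpha} = \varphi_0$ and $\partial_1 \circ \widehat{\alpha} = \psi_0$ required of an internal natural transformation are built into the construction of $\widehat{\alpha}$. Since composition in $\mathbb{A}_g$ is inherited from $\mathcal{E}$, the remaining naturality equation (for all $a, a' \in A$ and all $k \in B_{a'}^{B_a}$) is literally the equation in (ii).

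For (ii) $\Leftrightarrow$ (iii), I would read off from the pullback square defining $(\mathbb{A}_\varphi)_1$ in the proof of Theorem \ref{thmPolynomialsYieldInternalCategories} the explicit formulas $\mathbb{A}_\varphi(k) = \varphi_{2,a'}^{-1} \circ k \circ \varphi_{2,a}$ and $\mathbb{A}_\psi(k) = \psi_{2,a'}^{-1} \circ k \circ \psi_{2,a}$ for $k : B_a \to B_{a'}$. Substituting these into (ii), cancelling the outer isomorphisms $\varphi_{2,a'}^{-1}$ and $\psi_{2,a'}^{-1}$ on each side, and recognising the composite $\psi_2 \circ \alpha \circ \varphi_2^{-1}$ as $\gamma$ rewrites (ii) as (iii); this is purely symbolic. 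The direction (iv) $\Rightarrow$ (iii) is then immediate, since $\psi_2 \circ \alpha = \varphi_2$ forces $\gamma = \mathrm{id}_B$ and both sides of (iii) collapse to $k$.

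The real work is (iii) $\Rightarrow$ (iv), which I would prove in the internal language of $\mathcal{E}$. For arbitrary $a \in A$ and $b \in B_a$, specialise (iii) to $a' = a$ and to $k = c_b$, the constant-at-$b$ endomorphism of $B_a$; the equation $\gamma_a \circ c_b = c_b \circ \gamma_a$ evaluated at $b$ then reads $\gamma_a(b) = b$, so $\gamma_a = \mathrm{id}_{B_a}$ for every $a$ and hence $\gamma = \mathrm{id}_B$, which rearranges to the equation $\psi_2 \circ \alpha = \varphi_2$ of (iv). The subtlety, and the main obstacle, is justifying this externally in a general locally cartesian closed category: the required family of constant maps must be constructed as the exponential transpose of the second projection $B \times_A B \to B$ in $\mathcal{E}\slice{A}$, yielding a morphism $c : B \to (\mathbb{A}_f)_1$ over the diagonal $A \to A \times A$, after which the computation above becomes the statement that precomposing the externalisation of (iii), $\gamma \circ \mathrm{ev} = \mathrm{ev} \circ (\mathrm{id} \times_A \gamma)$ on $(\mathbb{A}_f)_1 \times_A B$, by $c \times_A \mathrm{id}_B$ collapses both sides to the first projection $B \times_A B \to B$ and forces $\gamma = \mathrm{id}_B$.
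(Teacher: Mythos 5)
Your proposal is correct and follows essentially the same route as the paper's proof: (i)$\Leftrightarrow$(ii) by unwinding Lemma \ref{lemHalfInternalNT}, (ii)$\Leftrightarrow$(iii) by conjugating with the isomorphisms $\varphi_2,\psi_2$ (the paper presents this as a pasting of commuting squares rather than an algebraic substitution, but the content is identical), and (iii)$\Rightarrow$(iv) by the same constant-function trick $\gamma_a(b)=\gamma_a(k(b))=k(\gamma_a(b))=b$. The only difference is that you additionally spell out the external construction of the internal family of constant maps as a transpose of a projection $B\times_A B\to B$, a detail the paper leaves implicit in its appeal to the internal language.
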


\begin{proof}
We prove (i)$\Leftrightarrow$(ii)$\Leftrightarrow$(iii)$\Leftrightarrow$(iv).

\begin{itemize}
\item[(i)$\Leftrightarrow$(ii)] In light of Lemma \ref{lemHalfInternalNT}, this is just a translation into the internal language of $\mathcal{E}$ of the definition of an internal natural transformation.

\item[(ii)$\Leftrightarrow$(iii)] Consider the following `internal' diagram, parametrised by $a,a' \in A$ and $k \in B_{a'}^{B_a}$:
\begin{center}
\begin{tikzcd}[row sep=huge, column sep=huge]
B_a
\arrow[d, "k"']
\arrow[r, "(\varphi_2)_a^{-1}"]
&
D_{\varphi(a)}
\arrow[d, "\mathbb{A}_{\varphi}(k)" description]
\arrow[r, "\alpha_a"]
&
D_{\psi(a)}
\arrow[d, "\mathbb{A}_{\psi}(k)" description]
\arrow[r, "(\psi_2)_a"]
&
B_a
\arrow[d, "k"]
\\
B_{a'}
\arrow[r, "(\varphi_2)_{a'}^{-1}"']
&
D_{\varphi(a')}
\arrow[r, "\alpha_{a'}"']
&
D_{\psi(a')}
\arrow[r, "(\psi_2)_{a'}"']
&
B_{a'}
\end{tikzcd}
\eqpunct{.}
\end{center}
The left- and right-hand squares commute by functoriality of $\mathbb{A}_{\varphi}$ and $\mathbb{A}_{\psi}$.
The centre square commutes if and only if (ii) holds, and the outer square commutes if and only if (iii) holds. But the centre square commutes if and only if the outer square commutes.

\item[(iii)$\Leftrightarrow$(iv)] Let $a \in A$ and $b \in B_a$, and let $k \in B_a^{B_a}$ be the constant (internal) function with value $b$. If (iii) holds, then
$$\gamma_a(b) = \gamma_a(k(b)) = k(\gamma_{a}(b)) = b$$
so that $\seqbn{\gamma_a = \mathrm{id}_{B_a}}{a \in A}$ holds. But this says precisely that $\gamma=\mathrm{id}_B$, and hence $\psi_2 \circ \alpha = \varphi_2$. The converse (iv)$\Rightarrow$(iii) is immediate.
\end{itemize}
\end{proof}

\begin{definition}
\label{defAdjustment}
Let $F : \declpoly IBAJsft$ and $G : \declpoly IDCJugv$ be polynomials and let $\varphi,\psi : F \pRightarrow G$ be morphisms of polynomials, as in:
\begin{center}
\begin{tikzcd}[row sep=normal, column sep=normal]
&
B
\arrow[r,"f"]
\arrow[dl, bend right=20, "s"']
&
A
\arrow[d, equals]
\arrow[dr, bend left=20, "t"]
&
& %%%%%%%%%%%
&
B
\arrow[r,"f"]
\arrow[dl, bend right=20, "s"']
&
A
\arrow[d, equals]
\arrow[dr, bend left=20, "t"]
&
\\
I
&
D_{\varphi}
\pullback
\arrow[r]
\arrow[d,"\varphi_1"']
\arrow[u,"\varphi_2"]
&
A
\arrow[d,"\varphi_0"]
&
J
& %%%%%%%%%%%
I
&
D_{\psi}
\pullback
\arrow[r]
\arrow[d,"\psi_1"']
\arrow[u,"\psi_2"]
&
A
\arrow[d,"\psi_0"]
&
J
\\
&
D
\arrow[r,"g"']
\arrow[ul, bend left=20, "u"]
&
C
\arrow[ur, bend right=20, "v"']
&
& %%%%%%%%%%%
&
D
\arrow[r,"g"']
\arrow[ul, bend left=20, "u"]
&
C
\arrow[ur, bend right=20, "v"']
&
\end{tikzcd}
\eqpunct{.}
\end{center}
An \textbf{adjustment} $\alpha$ from $\varphi$ to $\psi$, denoted $\alpha : \varphi \pRrightarrow \psi$, is a morphism $\alpha : D_{\varphi} \to D_{\psi}$ over $B$:
\begin{center}
\begin{tikzcd}[row sep=huge, column sep=huge]
D_{\varphi}
\arrow[rr,"\alpha"]
\arrow[dr,"\varphi_2"']
&&
D_{\psi}
\arrow[dl,"\psi_2"]
\\
&
B
&
\end{tikzcd}
\eqpunct{.}
\end{center}
\end{definition}

\begin{remark}
Lemma \ref{lemCartesianAdjustmentsAreInternalNT} tells us that, when $\varphi$ and $\psi$ are cartesian, adjustments $\alpha : \varphi \pRrightarrow \psi$ can equivalently be described as internal natural transformations $\widehat{\alpha} : \varphi \Rightarrow \psi$.
\end{remark}

\begin{conjecture}
\label{conjPolyETricategory}
There is a $\mathbf{2Cat}$-enriched bicategory $\mathfrak{Poly}_{\mathcal{E}}$, whose underlying bicategory is $\mathbf{Poly}_{\mathcal{E}}$ and whose 3-cells are adjustments.
\end{conjecture}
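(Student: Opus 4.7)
The plan is to construct all the data of a $\mathbf{2Cat}$-enriched bicategory on top of the existing bicategory $\mathbf{Poly}_{\mathcal{E}}$: (i) a 2-category structure on each hom $\mathbf{Poly}_{\mathcal{E}}(I,J)$, (ii) a 2-functor structure on polynomial composition $\circ_{I,J,K}$, (iii) 2-natural enhancements of the associator and unitors, and (iv) verification of the pentagon and triangle axioms. Throughout, Lemma \ref{lemPolynomialsFromOneToOne} lets us reduce to the case $I = J = 1$, and Lemma \ref{lemCartesianAdjustmentsAreInternalNT} together with Theorem \ref{thmPolynomialsYieldInternalCategories} furnishes a model for how the construction must behave on cartesian morphisms: transporting through $\mathbb{A}_{(-)}$, an adjustment between cartesian morphisms of polynomials becomes an internal natural transformation, and horizontal composition in $\mathbf{Cat}(\mathcal{E}\slice{I \times J})$ dictates the formula in that case.

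For (i), the identity adjustment on $\varphi$ is $\mathrm{id}_{D_\varphi}$ and vertical composition is composition in $\mathcal{E}\slice{B}$, which is functorial on the nose. Whiskering an adjustment $\alpha : \varphi \pRrightarrow \varphi'$ by a morphism $\chi : G \pRightarrow H$ (respectively $\theta : E \pRightarrow F$) is obtained by chasing $\alpha$ through the pullback square defining the composite $\chi \circ \varphi$ (respectively $\varphi \circ \theta$); one checks that the universal property of the relevant pullback produces a unique morphism of the required form, and that this assignment is functorial in $\alpha$. The interchange law within a hom then reduces to the commutativity of two composites of pullback-induced maps, which follows from uniqueness in the universal property.

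For (ii), the key technical step is horizontal composition of adjustments $\beta \circ \alpha$ across hom-2-categories, where $\alpha : \varphi \pRrightarrow \varphi'$ lies over $F$ and $\beta : \psi \pRrightarrow \psi'$ lies over $G$, yielding an adjustment between the composites $\psi \cdot \varphi$ and $\psi' \cdot \varphi'$ of polynomial morphisms. Tracing through Figure \ref{figPolynomialComposition}, the object $D_{\psi \cdot \varphi}$ is obtained by iterated applications of $\Delta$, $\Pi_g$, and a further pullback, so $\beta \circ \alpha$ is assembled by applying these same functors to $\alpha$ and $\beta$ respectively and then pasting. This is the hard part: verifying that the resulting horizontal composition is strictly associative, strictly functorial in each argument, and satisfies the middle-four interchange with the vertical and whiskering operations of (i). The cartesian case (where everything recovers natural-transformation horizontal composition) provides both a sanity check and a template, and the general case should follow by carefully mimicking that construction one pullback at a time.

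For (iii) and (iv), the associator $\alpha_{I,J,K,L}$ and unitors $\lambda_{I,J}$, $\rho_{I,J}$ of $\mathbf{Poly}_{\mathcal{E}}$ are given by canonical cartesian morphisms of polynomials built from universal properties; their 2-naturality with respect to adjustments amounts to a diagram chase showing that whiskering a canonical pullback-induced isomorphism by an adjustment on either side gives the same result up to identity. Because all of the 3-cells appearing in the statements of the pentagon and triangle are built from these canonical isomorphisms and therefore have identity adjustment components, the coherence axioms in the top dimension collapse to the already-established pentagon and triangle in the underlying bicategory $\mathbf{Poly}_{\mathcal{E}}$. The main risk in the proof --- and the reason the statement is presented as a conjecture --- is that the explicit pullback-by-pullback construction of horizontal composition across hom-2-categories may produce two provably-equal but not literally-equal composites, forcing either a coherence argument in dimension three or a reformulation of the 3-cells in terms closer to the internal-functor picture of Theorem \ref{thmPolynomialsYieldInternalCategories}.
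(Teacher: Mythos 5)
There is a genuine gap here, and it is worth being clear that the paper itself does not close it either: this statement is left as a \emph{conjecture}, with only partial progress recorded (the hom-2-categories in Lemma \ref{lemPolyEHom2Categories}, and the 1-cell components of the associator sketched in Remark \ref{rmkCompleteProofOfConjecture}); the authors explicitly defer the remaining details as ``somewhat cumbersome.'' Your step (i) reproduces Lemma \ref{lemPolyEHom2Categories} faithfully. But your step (ii) --- the 2-functoriality of $\circ_{I,J,K}$ on adjustments --- is exactly the part that is missing from the paper, and your proposal does not supply it: you say the horizontal composite $\beta \circ \alpha$ is ``assembled by applying these same functors to $\alpha$ and $\beta$ respectively and then pasting'' and that strict functoriality and middle-four interchange ``should follow by carefully mimicking that construction one pullback at a time.'' That is an assertion, not an argument. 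The composite $G \cdot F$ is built using $\Pi_g$ (a right adjoint) and a counit (Figure \ref{figPolynomialComposition}), so transporting a morphism $\alpha : D_\varphi \to D_\psi$ over $B$ through the composite involves distributivity/Beck--Chevalley comparisons, which is precisely where strictness typically breaks; this is the obstruction the conjecture status is signalling, and it cannot be waved through.

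A second soft spot: for (iii)--(iv) you claim the coherence 3-cells ``have identity adjustment components'' so that everything collapses to the underlying bicategory. The pentagon and triangle themselves are indeed equations between 2-cells and so carry over, but the new content is the (2- or pseudo-)naturality of $\alpha_{I,J,K,L}$, $\lambda$, $\rho$ with respect to adjustments between \emph{arbitrary} (not necessarily cartesian) morphisms of polynomials. Lemma \ref{lemTriviality} gives uniqueness of adjustments only when the target is cartesian; the naturality comparisons for the associator live between composites such as $\psi \cdot (\chi \cdot \varphi) \circ \alpha_{F,G,H}$ and $\alpha_{F',G',H'} \circ (\psi \cdot \chi) \cdot \varphi$, which are not cartesian when $\varphi, \chi, \psi$ are not, so no uniqueness argument applies and the ``diagram chase'' has to be done explicitly. (Remark \ref{rmkCompleteProofOfConjecture} only suspects these components are instances of the type-theoretic axiom of choice; it does not construct them.) In short: your roadmap agrees with the paper's and correctly locates the hard step, but neither you nor the paper has proved the statement; as written your proposal would not be accepted as a proof of the conjecture.
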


Unfortunately, the details required to fully prove Conjecture \ref{conjPolyETricategory} turned out to be somewhat cumbersome and, since its full force is not required for our main results, we have left the task of verifying these details for future work. Our progress so far is outlined in Lemma \ref{lemPolyEHom2Categories} and Remark \ref{rmkCompleteProofOfConjecture}, and we prove the analogous result with attention restricted to \textit{cartesian} morphisms of polynomials in Theorem \ref{thmPolyECartTrivial}.

\begin{lemma}
\label{lemPolyEHom2Categories}
Let $I$ and $J$ be objects in a locally cartesian closed category $\mathcal{E}$. There is a 2-category $\mathfrak{Poly}_{\mathcal{E}}(I,J)$ whose underlying category is $\mathbf{Poly}_{\mathcal{E}}(I,J)$ and whose 2-cells are adjustments.
\end{lemma}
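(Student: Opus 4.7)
The plan is to construct the 2-category $\mathfrak{Poly}_{\mathcal{E}}(I,J)$ by first assembling its hom-categories, then defining the horizontal composition functor, and finally verifying the 2-category coherence axioms.

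For the hom-categories, fix polynomials $F, G : I \pto J$ and let the objects of $\mathrm{Hom}(F,G)$ be the morphisms of polynomials $F \pRightarrow G$, with morphisms being adjustments between them. By Definition \ref{defAdjustment}, an adjustment $\alpha : \varphi \pRrightarrow \psi$ is exactly a morphism from $\varphi_2 : D_\varphi \to B$ to $\psi_2 : D_\psi \to B$ in the slice category $\mathcal{E}\slice{B}$, so vertical composition of adjustments and identity adjustments can be inherited directly from $\mathcal{E}\slice{B}$, and the category axioms are immediate.

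Next I build the horizontal composition functor $\mathrm{Hom}(G,H) \times \mathrm{Hom}(F,G) \to \mathrm{Hom}(F,H)$, whose action on 0-cells is the composition of morphisms of polynomials inherited from the bicategory $\mathbf{Poly}_{\mathcal{E}}$ of Theorem \ref{thmPolyEBicategory}. The middle object $D_{\chi\varphi}$ of a composite $\chi\varphi$ admits the pasted-pullback presentation $A \times_C D_\chi \cong A \times_K E$, and $(\chi\varphi)_2$ arises as $\varphi_2$ composed with the induced factorisation $D_{\chi\varphi} \to D_\varphi$. Given adjustments $\alpha : \varphi \pRrightarrow \psi$ and $\beta : \chi \pRrightarrow \chi'$, I define the horizontal composite $\beta * \alpha : \chi\varphi \pRrightarrow \chi'\psi$ as the unique morphism into the pullback $D_{\chi'\psi}$ whose projections to the factors are built from $\alpha$, $\beta$, and the structural maps. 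Both whiskerings are recovered as the special cases $\beta * \mathrm{id}_\varphi$ and $\mathrm{id}_\chi * \alpha$, and the 2-category axioms --- functoriality of horizontal composition, associativity, unit laws, and the interchange law --- each reduce to the statement that two morphisms into a common pullback agree, which follows by checking equality of their structural projections using the defining relation $\psi_2 \circ \alpha = \varphi_2$ and its analogue for $\beta$.

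The main obstacle is expected to be the left whiskering $\chi \cdot \alpha : \chi\varphi \pRrightarrow \chi\psi$ by an arbitrary morphism $\chi : G \pRightarrow H$. Whereas right whiskering by $\theta : F' \pRightarrow F$ amounts to a straightforward base change of $\alpha$ along $\theta_0$, the pullbacks $D_{\chi\varphi}$ and $D_{\chi\psi}$ are formed by pulling the projection $D_\chi \to C$ back along the distinct maps $\varphi_0$ and $\psi_0 : A \to C$, so the required morphism between them is not a base change and must be obtained from the universal property of $D_{\chi\psi}$ together with the action of $\alpha$. Pinning down this morphism and verifying that it lies over $B$ and is compatible with vertical composition is the bulk of the work; once this is done, the remaining axioms follow by bookkeeping with universal properties of pullbacks.
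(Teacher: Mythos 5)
Your overall route is the same as the paper's: vertical composition of adjustments is inherited from $\mathcal{E}\slice{B}$, horizontal composition of $1$-cells comes from the bicategory $\mathbf{Poly}_{\mathcal{E}}$, and the horizontal composite of adjustments is to be induced by the universal property of the pullback presenting $D_{\chi\varphi}$. You have also correctly located the only genuinely delicate point, namely left whiskering $\chi\cdot\alpha$, and correctly observed that right whiskering is just base change along $\theta_0$. The problem is that you then declare this step to be ``the bulk of the work'' and stop, and the construction you gesture at does not obviously exist. Writing $D_{\chi\varphi}\cong D_\varphi\times_{\varphi_1,\,D,\,\chi_2}D_\chi$ and $D_{\chi\psi}\cong D_\psi\times_{\psi_1,\,D,\,\chi_2}D_\chi$, the natural candidate cone $(\alpha\circ\pi_{D_\varphi},\ \pi_{D_\chi})$ lands in the second pullback only if $\psi_1\circ\alpha=\varphi_1$; but an adjustment is \emph{only} required to satisfy $\psi_2\circ\alpha=\varphi_2$, i.e.\ to lie over $B$, and the compatibility one can extract from the polynomial data is merely $u\circ\psi_1\circ\alpha=s\circ\psi_2\circ\alpha=s\circ\varphi_2=u\circ\varphi_1$, which is weaker. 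So either the identity $\psi_1\circ\alpha=\varphi_1$ must be derived (it isn't, in general), or the definition of the composite adjustment must be given by some other device; as written, the universal property of $D_{\chi\psi}$ has no cone to apply to. This is a genuine gap, and it sits exactly at the step you defer.

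In fairness, the paper's own proof compresses this same step into the single sentence that the composite of adjustments is ``induced by the universal property of pullbacks,'' so your proposal is no less complete than the printed argument; and for everything downstream only the cartesian case matters, where Lemma \ref{lemTriviality} makes all adjustments unique and the issue evaporates. But for Lemma \ref{lemPolyEHom2Categories} as stated, an actual construction of $\chi\cdot\alpha$ (or a strengthening of Definition \ref{defAdjustment} to require compatibility with $\varphi_1,\psi_1$) is the content of the proof, and neither your proposal nor your sketch of the universal-property argument supplies it.
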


\begin{proof}
Given polynomials $F,G : I \pto J$, the category $\mathfrak{Poly}_{\mathcal{E}}(I,J)(F,G)$ has morphisms of polynomials $F \pRightarrow G$ as its objects and adjustments as its morphisms, with identity and composition inherited from $\mathcal{E}\slice{B}$.

Given a polynomial $F : \declpoly IBAJsft$, we have an evident functor $\mathbf{1} \to \mathfrak{Poly}_{\mathcal{E}}(I,J)(F,F)$ picking out the identity morphism $F \pRightarrow F$ and the identity adjustment on this morphism.

Let $F,G,H : I \pto J$ be polynomials. The composition functor
$$c : \mathbf{Poly}_{\mathcal{E}}(I,J)(G,H) \times \mathbf{Poly}_{\mathcal{E}}(I,J)(F,G) \to \mathbf{Poly}_{\mathcal{E}}(I,J)(F,H)$$
is defined as follows. The composite $c(\psi,\varphi)$ of $\varphi : F \pRightarrow G$ and $\psi : G \pRightarrow H$ is defined using a pullback construction, as defined in \cite[3.9]{GambinoKock2013}---in particular, the morphism $(\psi \circ \varphi)_2 : D_{\psi \circ \varphi} \to B$ is induced by the universal property of pullbacks. This yields, for each pair of adjustments $\alpha : \varphi \pRrightarrow \varphi'$ and $\beta : \psi \pRrightarrow \psi'$, a unique morphism $D_{\psi \circ \varphi} \to D_{\psi' \circ \varphi'}$ in $\mathcal{E}$ induced by the universal property of pullbacks, which is an adjustment since it makes the required triangle in $\mathcal{E}\slice{B}$ commute. We take this morphism to be $c(\beta,\alpha)$. Functoriality of $c$ is then immediate from the universal property of pullbacks.

It can be easily verified that this data satisfies the required identity and associativity axioms. Thus we have a 2-category.
\end{proof}

\begin{remark}
\label{rmkCompleteProofOfConjecture}
In order to prove Conjecture \ref{conjPolyETricategory} in its entirety, it remains to define the coherence 2-natural isomorphisms $\alpha,\lambda,\rho$, as described in Definition \ref{def2CatEnrichedBicategory}, and verify that the required diagrams commute.

To give the reader an idea of the flavour of this task, we present some progress towards defining the associator 2-natural transformation $\alpha$. For each quadruple of objects $I,J,K,L$ of $\mathcal{E}$, this must assign to each triple of polynomials $I \overset{F}{\pto} J \overset{G}{\pto} K \overset{H}{\pto} L$ a morphism of polynomials $\alpha_{F,G,H} : (H \cdot G) \cdot F \pRightarrow H \cdot (G \cdot F)$ and, to each triple of morphisms of polynomials
$$\varphi : F \pRightarrow F', \quad \chi : G \pRightarrow G', \quad \psi : H \pRightarrow H'
\eqpunct{,}$$
an adjustment
$$\alpha_{\varphi,\chi,\psi} : \psi \cdot (\chi \cdot \varphi) \circ \alpha_{F,G,H} \pRrightarrow \alpha_{F',G',H'} \circ (\psi \cdot \chi) \cdot \varphi : (F \cdot G) \cdot H \pRightarrow F' \cdot (G' \cdot H')
\eqpunct{,}$$
which satisfy naturality laws and behave well with respect to composition and identity.

Restricting to the case $I=J=K=L=1$, let $f : B \to A$, $g : D \to C$ and $h : F \to E$ be morphisms of $\mathcal{E}$, considered as polynomials $1 \pto 1$ as usual. We will construct an invertible (and hence cartesian) morphism of polynomials $\alpha_{f,g,h} : (h \cdot g) \cdot f \pRightarrow h \cdot (g \cdot f)$. Such a morphism must fit into the following pullback square:
\begin{center}
\begin{tikzcd}[row sep=huge, column sep=huge]
%\sum_{e \in E} \sum_{n \in C^{F_e}} \sum_{q \in A^{\sum_{f \in F_e} D_{n_f}}} \sum_{f \in F_e} \sum_{d \in D_{n(f)}} B_{q(f,d)}
\sum_{e,n,q} \sum_{f \in F_e} \sum_{d \in D_{n(f)}} B_{q(f,d)}
\arrow[d, "(h \cdot g) \cdot f"']
\arrow[r, "{(\alpha_{f,g,h})_1}"]
\pullbackc{dr}{-0.1}
&
%\sum_{e \in E} \sum_{p \in \left( \sum_{c \in C} A^{D_c} \right)^{F_e}} \sum_{f \in F_e} \sum_{d \in D_{c_f}} B_{m_f(d)}
\sum_{e,p} \sum_{f \in F_e} \sum_{d \in D_{c_f}} B_{m_f(d)}
\arrow[d, "h \cdot (g \cdot f)"]
\\
\sum_{e \in E} \sum_{n \in C^{F_e}} \prod_{f \in F_e} \prod_{d \in D_{n(f)}} A
\arrow[r, "{(\alpha_{f,g,h})_0}"']
&
\sum_{e \in E} \prod_{f \in F_e} \sum_{c \in C} \prod_{d \in D_c} A
\end{tikzcd}
\eqpunct{.}
\end{center}
In the above, we have overloaded the letter $f$, which is ambiguous between the morphism $f : B \to A$ of $\mathcal{E}$ and an internal `element' $f \in F_e$; and we have written $p(f)=(c_f,m_f)$ for $p \in \prod_{f \in F_e} \sum_{c \in C} \prod_{d \in D_c} A$ and $f \in F_e$.

The isomorphism $(\alpha_{f,g,h})_0$ is given by applying the type theoretic axiom of choice to exchange the middle $\Sigma\Pi$. Specifically, we have
$$(\alpha_{f,g,h})_0(e,n,q) = (e, \lambda f. \langle n(f), q(f) \rangle)
\eqpunct{.}$$
The isomorphism $(\alpha_{f,g,h})_1$ acts trivially; that is, we have
$$(\alpha_{f,g,h})_1(e,n,q,f,d,b) = ((\alpha_{f,g,h})_0(e,n,q),f,d,b)
\eqpunct{.}$$

We suspect that the definition of $\alpha_{\varphi,\chi,\psi}$ will also be an instance of the type theoretic axiom of choice. From this, it will be an exercise in symbolic manipulations to check that the `Mac Lane pentagon' will commute.
\end{remark}

The situation in which we restrict our attention to cartesian morphisms of polynomials is greatly simplified by the following lemma, allowing us to prove Conjecture \ref{conjPolyETricategory} for this case in Theorem \ref{thmPolyECartTrivial}.

\begin{lemma}
\label{lemTriviality}
Let $\varphi$ and $\psi$ be morphisms of polynomials. If $\psi$ is cartesian then there is a unique adjustment from $\varphi$ to $\psi$.
\end{lemma}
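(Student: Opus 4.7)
The proof reduces entirely to unpacking Definition \ref{defAdjustment} in light of Definition \ref{defCartesianMorphism}, so my plan is essentially a direct calculation once the definitions are laid out carefully. The key observation I would make first is that, according to Definition \ref{defAdjustment}, an adjustment $\alpha : \varphi \pRrightarrow \psi$ carries exactly one piece of data---a morphism $\alpha : D_\varphi \to D_\psi$ of $\mathcal{E}$---subject to exactly one equation, namely the triangle condition $\psi_2 \circ \alpha = \varphi_2$ over $B$. No compatibility with $\varphi_0$, $\varphi_1$, $\psi_0$, $\psi_1$, or with the defining pullback squares is imposed by the definition.

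The hypothesis that $\psi$ is cartesian then says precisely, by Definition \ref{defCartesianMorphism}, that $\psi_2 : D_\psi \to B$ is invertible. Consequently the equation $\psi_2 \circ \alpha = \varphi_2$ admits a unique solution, and the only solution is
\[ \alpha \;:=\; \psi_2^{-1} \circ \varphi_2 : D_\varphi \to D_\psi. \]
Uniqueness is immediate, since any adjustment from $\varphi$ to $\psi$ must satisfy the triangle equation and hence must equal this composite. For existence, one checks in a single line that this $\alpha$ indeed satisfies the defining equation: $\psi_2 \circ (\psi_2^{-1} \circ \varphi_2) = \varphi_2$.

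There is genuinely no obstacle here; the lemma is a direct consequence of the minimality of the data demanded by the definition of adjustment combined with the definitional unpacking of \textbf{cartesian}. I would emphasise in the write-up that $\varphi$ is \emph{not} assumed cartesian---$\varphi_2$ can be an arbitrary morphism---and this is no impediment, because $\psi_2^{-1} \circ \varphi_2$ is well-defined and satisfies the required triangle regardless of the behaviour of $\varphi_2$.
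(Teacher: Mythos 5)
Your proof is correct and matches the paper's argument exactly: since $\psi_2$ is invertible, $\alpha = \psi_2^{-1} \circ \varphi_2$ is the unique morphism satisfying $\psi_2 \circ \alpha = \varphi_2$. Your additional remarks about the minimality of the data in Definition \ref{defAdjustment} and the irrelevance of whether $\varphi$ is cartesian are accurate glosses on the same one-line argument.
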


\begin{proof}
When $\psi$ is cartesian, the morphism $\psi_2$ is invertible, so that $\alpha = \psi_2^{-1} \circ \varphi_2$ is the only morphism making the required triangle commute.
\end{proof}

From Theorem \ref{thmPolyEBicategory}(d) and Lemma \ref{lemTriviality}, we immediately obtain the following theorem.

\begin{theorem}
\label{thmPolyECartTrivial}
There is a $\mathbf{2Cat}$-enriched bicategory $\mathfrak{Poly}\cart_{\mathcal{E}}$ (which, modulo Conjecture \ref{conjPolyETricategory}, is a sub-$\mathbf{2Cat}$-enriched bicategory of $\mathfrak{Poly}_{\mathcal{E}}$), whose underlying bicategory is $\mathbf{Poly}\cart_{\mathcal{E}}$ and whose hom 2-categories $\mathfrak{Poly}\cart_{\mathcal{E}}(I,J)$ are locally codiscrete for all objects $I,J$ of $\mathcal{E}$.
\end{theorem}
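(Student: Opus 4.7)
The plan is to reduce everything to Theorem \ref{thmPolyEBicategory}(d) together with Lemma \ref{lemTriviality}, exploiting the fact that in the cartesian setting adjustments between parallel morphisms are uniquely determined. This will make all data and coherence above dimension 2 essentially automatic.

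First, I would define each hom 2-category $\mathfrak{Poly}\cart_{\mathcal{E}}(I,J)$ by restricting the construction of Lemma \ref{lemPolyEHom2Categories} to cartesian 1-cells, keeping adjustments as 2-cells. Since composition of cartesian morphisms of polynomials is cartesian (Theorem \ref{thmPolyEBicategory}(d)) and identities are trivially cartesian, this restriction is closed under the composition of adjustments built in Lemma \ref{lemPolyEHom2Categories}, yielding genuine 2-categories. Local codiscreteness is then immediate from Lemma \ref{lemTriviality}: between any parallel cartesian $\varphi,\psi : F \pRightarrow G$, the morphism $\alpha = \psi_2^{-1} \circ \varphi_2$ is the unique adjustment $\varphi \pRrightarrow \psi$, so each hom-category of each hom 2-category has exactly one morphism between any two objects.

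Next, I would transport the rest of the $\mathbf{2Cat}$-enriched bicategory structure from the underlying bicategory $\mathbf{Poly}\cart_{\mathcal{E}}$ of Theorem \ref{thmPolyEBicategory}(d). The composition and identity 2-functors are given by horizontal composition and identities of cartesian morphisms of polynomials; the associator and unitors have components that are invertible, and hence cartesian, coherence morphisms of polynomials. At the level of 3-cells no further construction is required: 2-functoriality of composition, 2-naturality of the associator and unitors, and the pentagon and triangle coherence axioms are all equalities of parallel adjustments between cartesian morphisms, and thus hold by local codiscreteness.

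The main point requiring any real care will be the first step, namely checking that the composition-of-adjustments construction of Lemma \ref{lemPolyEHom2Categories} restricts cleanly to cartesian 1-cells; but that construction is purely an exercise in the universal property of pullbacks and is insensitive to cartesianness, so no obstacle arises. The parenthetical identification of $\mathfrak{Poly}\cart_{\mathcal{E}}$ as a sub-$\mathbf{2Cat}$-enriched bicategory of $\mathfrak{Poly}_{\mathcal{E}}$, under Conjecture \ref{conjPolyETricategory}, is then clear by construction: the definition above is, pointwise, the full sub-structure of $\mathfrak{Poly}_{\mathcal{E}}$ on cartesian 2-cells.
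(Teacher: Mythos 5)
Your proposal is correct and follows essentially the same route as the paper: the underlying bicategory is taken from Theorem \ref{thmPolyEBicategory}(d), the hom 2-categories are the restriction of the construction in Lemma \ref{lemPolyEHom2Categories} to cartesian 1-cells, and Lemma \ref{lemTriviality} makes all data and coherence in dimension 3 unique and automatic. The paper's own proof is just a terser statement of exactly this argument.
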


\begin{proof}
The description of the $\mathbf{2Cat}$-enriched bicategory data is given in the work towards a proof of Conjecture \ref{conjPolyETricategory}. The coherence data is uniquely defined and satisfies the required equations by Lemma \ref{lemTriviality}.
\end{proof}

Before moving on, we extend Lemma \ref{lemPolynomialsFromOneToOne} to our tricategorical setting.

\begin{lemma}
\label{lemPolynomialsFromOneToOneTwoCategorical}
For fixed objects $I$ and $J$ of a locally cartesian closed category $\mathcal{E}$, there are isomorphisms of 2-categories
$$S : \mathfrak{Poly}_{\mathcal{E}}(I,J) \overset{\cong}{\longrightarrow} \mathfrak{Poly}_{\mathcal{E}\slice{I \times J}}(1,1) \quad \text{and} \quad S\cart : \mathfrak{Poly}\cart_{\mathcal{E}}(I,J) \overset{\cong}{\longrightarrow} \mathfrak{Poly}\cart_{\mathcal{E} \slice{I \times J}}(1,1)
\eqpunct{.}$$
\end{lemma}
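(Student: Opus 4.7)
The plan is to extend the isomorphism $S$ of Lemma \ref{lemPolynomialsFromOneToOne}, which is already an isomorphism of 1-categories, to an isomorphism of 2-categories by specifying how it acts on adjustments (the 2-cells of $\mathfrak{Poly}_{\mathcal{E}}(I,J)$ in the sense of Lemma \ref{lemPolyEHom2Categories}), and then verifying 2-functoriality. Since the action on 0- and 1-cells is already in place, the only remaining data is the action on 2-cells together with preservation of identities and vertical composition; and for the cartesian version, the extension will be forced.

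For the non-cartesian case, given an adjustment $\alpha : \varphi \pRrightarrow \psi$ between parallel morphisms $\varphi, \psi : F \pRightarrow G$, i.e.\ a morphism $\alpha : D_{\varphi} \to D_{\psi}$ in $\mathcal{E}\slice{B}$ (where $B$ is the apex of $F$), I set $S(\alpha) = \alpha$ as a bare morphism of $\mathcal{E}$. The key observation is that under the construction of Lemma \ref{lemPolynomialsFromOneToOne}, the apex of $S(F)$ is the same object $B$, now viewed as an object of $\mathcal{E}\slice{I \times J}$ via $\langle s, t \circ f \rangle$, and moreover $S(\varphi)_2 = \varphi_2$, $S(\psi)_2 = \psi_2$. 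Since there is a canonical isomorphism $(\mathcal{E}\slice{I \times J})\slice{B} \cong \mathcal{E}\slice{B}$, the adjustment triangle
\begin{center}
\begin{tikzcd}[row sep=small, column sep=small]
D_{\varphi} \arrow[rr, "\alpha"] \arrow[dr, "\varphi_2"'] && D_{\psi} \arrow[dl, "\psi_2"] \\ & B &
\end{tikzcd}
\end{center}
commutes in $\mathcal{E}$ if and only if it commutes in $\mathcal{E}\slice{I \times J}$, so $S(\alpha)$ is indeed an adjustment $S(\varphi) \pRrightarrow S(\psi)$. The same identification transports identities and composition of adjustments between the two settings verbatim, so $S$ is 2-functorial on each hom. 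Applying the same rule in reverse gives the two-sided inverse on 2-cells.

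For the cartesian version $S\cart$, I would appeal to Theorem \ref{thmPolyECartTrivial}: the 2-categories $\mathfrak{Poly}\cart_{\mathcal{E}}(I,J)$ and $\mathfrak{Poly}\cart_{\mathcal{E}\slice{I \times J}}(1,1)$ are locally codiscrete, so there is a unique adjustment between any parallel pair of cartesian morphisms. The bijection on 1-cells provided by Lemma \ref{lemPolynomialsFromOneToOne} therefore lifts uniquely to an isomorphism of 2-categories, with no further work to do. The only point that is not entirely automatic is confirming that the slice identification $(\mathcal{E}\slice{I \times J})\slice{B} \cong \mathcal{E}\slice{B}$ really does translate the adjustment condition as stated, but this is a direct consequence of the standard fact that slicing is iterative; beyond this tiny bookkeeping, the proof is formal and reduces to unpacking the construction of $S$ from Lemma \ref{lemPolynomialsFromOneToOne}.
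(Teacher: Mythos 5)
Your proposal is correct and matches the paper's own argument: the paper likewise defines $S$ to be the identity on adjustments, using precisely the observations that $S(\varphi)_2 = \varphi_2$, $S(\psi)_2 = \psi_2$ and that $(\mathcal{E}\slice{I \times J})\slice{\langle s, t \circ f\rangle} \cong \mathcal{E}\slice{B}$, so that the adjustment condition is the same in both settings. The extra appeal to local codiscreteness for $S\cart$ is harmless but not needed, since the identity-on-adjustments description already restricts to the cartesian sub-2-categories.
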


\begin{proof}
Let $F : \declpoly IBAJsft$ and $G : \declpoly IDCJugv$ be polynomials $I \pto J$, and let $\varphi,\psi$ be morphisms of polynomials $F \pRightarrow G$. An adjustment $\alpha : \varphi \pRrightarrow \psi$ is simply a morphism $\alpha : \varphi_2 \to \psi_2$ in $\mathcal{E}\slice{B}$. Since $S(\varphi)_2 = \varphi_2$ and $S(\psi)_2 = \psi_2$, an adjustment $S(\varphi) \pRrightarrow S(\psi)$ is a morphism $\varphi_2 \to \psi_2$ in $(\mathcal{E} \slice{I \times J}) \slice{\langle s, t \circ f \rangle} \cong \mathcal{E} \slice{B}$. So we can take $S$ to be the identity on adjustments. This trivially extends the functors $S$ and $S\cart$ of Lemma \ref{lemPolynomialsFromOneToOne} to 2-functors.
\end{proof}

\begin{theorem}
\label{thmPolynomialsYieldInternalCategories2Functor}
Fix objects $I$ and $J$ in a locally cartesian closed category $\mathcal{E}$. There is a locally full and faithful 2-functor
$$\mathbb{A}_{(-)} : \mathfrak{Poly}\cart_{\mathcal{E}}(I,J) \to \mathbf{Cat}(\mathcal{E} \slice{I \times J})
\eqpunct{,}$$
whose underlying 1-functor is as in Theorem \ref{thmPolynomialsYieldInternalCategories}.
\end{theorem}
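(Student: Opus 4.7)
The plan is to reduce to the case $I = J = 1$ via Lemma \ref{lemPolynomialsFromOneToOneTwoCategorical}, which exhibits an isomorphism of 2-categories $S\cart : \mathfrak{Poly}\cart_{\mathcal{E}}(I,J) \overset{\cong}{\to} \mathfrak{Poly}\cart_{\mathcal{E}\slice{I\times J}}(1,1)$. It then suffices to construct a locally full and faithful 2-functor $\mathbb{A}_{(-)} : \mathfrak{Poly}\cart_{\mathcal{E}\slice{I\times J}}(1,1) \to \mathbf{Cat}(\mathcal{E}\slice{I\times J})$ extending the 1-functor supplied by Theorem \ref{thmPolynomialsYieldInternalCategories}. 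The actions on 0- and 1-cells are already in place, so only the action on 2-cells and the 2-functor axioms remain.

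Given an adjustment $\alpha : \varphi \pRrightarrow \psi$ between cartesian morphisms $\varphi, \psi : f \pRightarrow g$, we use the canonical isomorphisms $\varphi_2, \psi_2$ of Remark \ref{rmkCartesianMorphismIsPullbackSquare} to view $\alpha$ as a morphism $\Delta_{\varphi_0}D \to \Delta_{\psi_0}D$ in the slice over $A$. Lemma \ref{lemHalfInternalNT} then produces a morphism $\widehat{\alpha} : A \to (\mathbb{A}_g)_1$ over $C \times C$, and the very definition of an adjustment --- being a morphism over $B$ --- is precisely condition (iv) of Lemma \ref{lemCartesianAdjustmentsAreInternalNT}. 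The equivalence (iv)$\Leftrightarrow$(i) of that lemma then yields that $\widehat{\alpha}$ is an internal natural transformation $\mathbb{A}_\varphi \Rightarrow \mathbb{A}_\psi$. Setting $\mathbb{A}_\alpha := \widehat{\alpha}$, the assignment $\alpha \mapsto \widehat{\alpha}$ is a bijection between adjustments $\varphi \pRrightarrow \psi$ and internal natural transformations $\mathbb{A}_\varphi \Rightarrow \mathbb{A}_\psi$, which is exactly local fullness and faithfulness.

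Verifying the 2-functor axioms on 2-cells is greatly simplified by Theorem \ref{thmPolyECartTrivial}: the 2-category $\mathfrak{Poly}\cart_{\mathcal{E}}(I,J)$ is locally codiscrete, so any two parallel adjustments coincide, and local faithfulness of $\mathbb{A}_{(-)}$ transfers this uniqueness to natural transformations in its image. Preservation of identity 2-cells and of vertical and horizontal composition therefore reduces to the tautology that, with at most one 2-cell available between any pair of parallel 1-cells, the image of a composite must coincide with the composite of its images. The main obstacle we anticipate is bookkeeping rather than conceptual: to present the verification rigorously one should exhibit the identity natural transformation on $\mathbb{A}_\varphi$ explicitly as the image of $\mathrm{id}_\varphi$ and use naturality of the exponential transpose to confirm that composition of adjustments in $\mathcal{E}\slice{B}$ intertwines with composition in the internal category $\mathbb{A}_g$. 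Given the lemmas already established, this amounts to unwinding definitions.
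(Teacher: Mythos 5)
Your proposal is correct and follows essentially the same route as the paper: the action on 2-cells is given by the bijection between adjustments and internal natural transformations from Lemmas \ref{lemHalfInternalNT} and \ref{lemCartesianAdjustmentsAreInternalNT}, and both the 2-functor axioms and local full faithfulness come for free because the relevant hom-sets are singletons (Lemma \ref{lemTriviality} / Theorem \ref{thmPolyECartTrivial}). The paper's proof is just a terser version of the same argument, without the explicit reduction to $I=J=1$ at the outset.
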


\begin{proof}
Let $\varphi,\psi : F \pRightarrow G$ be cartesian morphisms of polynomials $I \pto J$. We proved in Lemma \ref{lemCartesianAdjustmentsAreInternalNT} that adjustments $\alpha : \varphi \pRrightarrow \psi$ correspond bijectively with internal natural transformations $\widehat{\alpha} : \mathbb{A}_{\varphi} \Rightarrow \mathbb{A}_{\psi}$. Moreover, by Lemma \ref{lemTriviality}, there is a unique internal natural transformation $\mathbb{A}_{\varphi} \Rightarrow \mathbb{A}_{\psi}$. As such, defining $\mathbb{A}_{\alpha} = \widehat{\alpha}$ for all adjustments $\alpha$, we automatically obtain a 2-functor, which is locally full and faithful since the hom-sets $\mathfrak{Poly}\cart_{\mathcal{E}}(I,J)(F,G)(\varphi,\psi)$ and $\mathbf{Cat}(\mathcal{E}\slice{I \times J})(\mathbb{A}_F,\mathbb{A}_G)(\mathbb{A}_{\varphi},\mathbb{A}_{\psi})$ are both singletons.
\end{proof}

\subsection{Polynomial pseudomonads}

We are now ready to define the notion of a polynomial pseudomonad. First, we recall the definition of a pseudomonad in a $\mathbf{2Cat}$-enriched bicateogry (in fact, the definition works just fine in an arbitrary tricategory).

\begin{definition}
\label{defPseudomonadIn2CatEnrichedBicategory}
Let $\mathfrak{B}$ be a $\mathbf{2Cat}$-enriched bicategory. A \textbf{pseudomonad} $\mathbb{T}$ in $\mathfrak{B}$ consists of:
\begin{itemize}
\item A 0-cell $I$ of $\mathfrak{B}$;
\item A 1-cell $t : I \to I$;
\item 2-cells $\eta : \mathrm{id}_I \Rightarrow t$ and $\mu : t \cdot t \Rightarrow t$, called the \textbf{unit} and \textbf{multiplication} of the pseudomonad, respectively;
\item Invertible 3-cells $\alpha, \lambda, \rho$, called the \textbf{associator}, \textbf{left unitor} and \textbf{right unitor} of the pseudomonad, respectively, as in
\begin{center}
\begin{tikzcd}[row sep=huge, column sep=huge]
t \cdot t \cdot t
\arrow[r, Rightarrow, "t \cdot \mu"]
\arrow[d, Rightarrow, "\mu \cdot t"']
&
t \cdot t
\arrow[d, Rightarrow, "\mu"]
\arrow[dl, draw=none, pos=0.5, "\alpha \threecell{225}" description]
&
t
\arrow[r, Rightarrow, "t \cdot \eta"]
\arrow[dr, Rightarrow, "\mathrm{id}_t"']
&
t \cdot t
\arrow[d, Rightarrow, "\mu" description]
\arrow[dl, draw=none, pos=0.25, "\lambda \threecell{225}" description]
\arrow[dr, draw=none, pos=0.25, "\rho \threecell{315}" description]
&
t
\arrow[l, Rightarrow, "\eta \cdot t"']
\arrow[dl, Rightarrow, "\mathrm{id}_t"]
\\
t \cdot t
\arrow[r, Rightarrow, "\mu"']
&
t
&
~
&
t
&
~
\end{tikzcd}
\eqpunct{.}
\end{center}
\end{itemize}
such that the following equations of pasting diagrams hold:

\begin{center}
\begin{tikzcd}[row sep=huge, column sep={40pt}]
t \cdot t \cdot t \cdot t
\arrow[r, Rightarrow, "t \cdot t \cdot \mu"]
\arrow[d, Rightarrow, "\mu \cdot t \cdot t"']
\arrow[dr, Rightarrow, "t \cdot \mu \cdot t" description]
&
t \cdot t \cdot t
\arrow[dr, Rightarrow, "t \cdot \mu"]
\arrow[d, draw=none, pos=0.5, "t \cdot \alpha\ \threecell{270} \phantom{T\alpha}" description]
&
&
t \cdot t \cdot t \cdot t
\arrow[r, Rightarrow, "t \cdot t \cdot \mu"]
\arrow[d, Rightarrow, "\mu \cdot t \cdot t"']
\arrow[dr, draw=none, pos=0.5, "\cong" description]
&
t \cdot t \cdot t
\arrow[dr, Rightarrow, "t \cdot \mu"]
\arrow[d, Rightarrow, "\mu \cdot t" description]
&
\\
t \cdot t \cdot t
\arrow[dr, Rightarrow, "\mu \cdot t"']
\arrow[r, draw=none, pos=0.5, "\threecell{0}"', "\alpha \cdot t"]
&
t \cdot t \cdot t
\arrow[r, Rightarrow, "t \cdot \mu" description]
\arrow[d, Rightarrow, "\mu \cdot t" description]
\arrow[dr, draw=none, pos=0.5, "\alpha \threecell{315} \phantom{\alpha}" description]
&
t \cdot t
\arrow[d, Rightarrow, "\mu"]
\arrow[r, draw=none, "=" description]
&
t \cdot t \cdot t
\arrow[r, Rightarrow, "t \cdot \mu" description]
\arrow[dr, Rightarrow, "\mu \cdot t"']
&
t \cdot t
\arrow[dr, Rightarrow, "\mu" description]
\arrow[r, draw=none, pos=0.5, "\alpha", "\threecell{0}"']
\arrow[d, draw=none, pos=0.5, "\alpha \threecell{270} \phantom{\alpha}" description]
&
t \cdot t
\arrow[d, Rightarrow, "\mu" description]
\\
&
t \cdot t
\arrow[r, Rightarrow, "\mu"']
&
t
&
&
t \cdot t
\arrow[r, Rightarrow, "\mu"']
&
t
\end{tikzcd}
\eqpunct{,}
\end{center}

\begin{center}
\begin{tikzcd}[row sep=huge, column sep={40pt}]
t \cdot t \cdot t
\arrow[r, Rightarrow, "t \cdot \mu"]
\arrow[dr, Rightarrow, "\mu \cdot t" description]
&
t \cdot t
\arrow[dr, Rightarrow, "\mu"{name=domequals}]
\arrow[d, draw=none, pos=0.5, "\alpha \threecell{270} \phantom{\alpha}" description]
&
&
t \cdot t \cdot t
\arrow[r, Rightarrow, "t \cdot \mu"]
\arrow[dr, draw=none, pos=0.2, "t \cdot \rho \threecell{315}" description]
&
t \cdot t
\arrow[dr, Rightarrow, "\mu"]
\arrow[d, draw=none, pos=0.5, "=" description]
&
\\
t \cdot t
\arrow[r, Rightarrow, "\mathrm{id}_{t \cdot t}"']
\arrow[u, Rightarrow, "t \cdot \eta \cdot t"]
\arrow[ur, draw=none, pos=0.2, "\lambda \cdot t \threecell{293}" description]
&
t \cdot t
\arrow[r, Rightarrow, "\mu"']
&
t
&
t \cdot t
\arrow[u, Rightarrow, "t \cdot \eta \cdot t"{name=codequals}]
\arrow[from=domequals, to=codequals, draw=none, pos=0.67, "=" description]
\arrow[rr, Rightarrow, "\mu"']
\arrow[ur, Rightarrow, "\mathrm{id}_{t \cdot t}" description]
&
~
&
t
\end{tikzcd}
\eqpunct{.}
\end{center}
\end{definition}

\begin{remark}
\label{rmk2MonadPseumonadOn2Cat}
We reserve the following terminology for particular cases of pseudomonads in $\mathbf{2Cat}$-enriched bicategories:
\begin{itemize}
\item When the 3-cells $\alpha,\lambda,\rho$ are identities, we call $\mathbb{T}$ a \textbf{2-monad} in $\mathfrak{B}$. Note that a 2-monad in $\mathfrak{B}$ restricts to a monad in the underlying bicategory of $\mathfrak{B}$, and that every monad in the underlying bicategory of $\mathfrak{B}$ is automatically a 2-monad in $\mathfrak{B}$.
\item When $\mathfrak{B}=\mathbf{2Cat}$ is the 3-category of 2-categories, 2-functors, pseudo-natural transformations and modifications, and the underlying 0-cell of $\mathbb{T}$ is a 2-category $\mathcal{K}$, we say that $\mathbb{T}$ is a pseudomonad (or 2-monad) \textbf{on} $\mathcal{K}$.
\end{itemize}
\end{remark}

\begin{definition}
\label{defPolynomialPseudomonad}
A \textbf{polynomial 2-monad} (resp.\ \textbf{polynomial pseudomonad}) is a 2-monad (resp.\ pseudomonad) in the $\mathbf{2Cat}$-enriched bicategory $\mathfrak{Poly}\cart_{\mathcal{E}}$. Specifically, a polynomial pseudomonad consists of the following data:
\begin{itemize}
\item An object $I$ of $\mathcal{E}$;
\item A polynomial $p : I \pto I$;
\item Cartesian morphisms of polynomials $\eta : i_I \pRightarrow p$ and $\mu : p \cdot p \pRightarrow p$;
\item Invertible adjustments $\alpha : \mu \circ (p \cdot \mu) \pRrightarrow \mu \circ (\mu \cdot p)$, $\lambda : \mu \circ (\eta \cdot p) \pRrightarrow \mathrm{id}_p$ and $\rho : \mu \circ (p \cdot \eta) \pRrightarrow \mathrm{id}_p$;
\end{itemize}
such that the adjustments $\alpha, \lambda, \rho$ satisfy the coherence axioms of Definition \ref{defPseudomonadIn2CatEnrichedBicategory}.
\end{definition}

A consequence of Theorem \ref{thmPolyECartTrivial} is that all parallel pairs of cartesian morphisms of polynomials are uniquely isomorphic. It follows that, in this case, simply specifying the \textit{data} for a polynomial monad suffices for defining a polynomial pseudomonad---this is stated precisely in the following lemma, whose proof is immediate.

\begin{lemma}
\label{lemMonadDataIsPseudomonad}
Let $I$ be an object of $\mathcal{E}$, let $p : I \pto I$ be a polynomial and let $\eta : i_I \pRightarrow p$ and $\mu : p \cdot p \pRightarrow p$ be cartesian morphisms of polynomials. Then there are unique adjustments $\alpha,\lambda,\rho$ such that the septuple $\mathbb{P}=(I,p,\eta,\mu,\alpha,\lambda,\rho)$ is a polynomial pseudomonad in $\mathcal{E}$. \qed
\end{lemma}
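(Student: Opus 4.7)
The plan is to exploit the local codiscreteness of $\mathfrak{Poly}\cart_{\mathcal{E}}$ established in Theorem \ref{thmPolyECartTrivial}: once we know that all of the 2-cells appearing as source and target of the required adjustments are cartesian, then by Lemma \ref{lemTriviality} each of $\alpha$, $\lambda$, $\rho$ exists and is unique, and every diagram of 3-cells with matching boundary commutes automatically.

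First I would check that the source and target of each of $\alpha$, $\lambda$, $\rho$ are themselves cartesian morphisms of polynomials. Since $\eta$ and $\mu$ are cartesian by hypothesis, and since cartesian morphisms are closed under both horizontal and vertical composition (by Theorem \ref{thmPolyEBicategory}(d), which asserts that $\mathbf{Poly}\cart_{\mathcal{E}}$ is a genuine sub-bicategory of $\mathbf{Poly}_{\mathcal{E}}$), the 2-cells $\mu \circ (\mu \cdot p)$, $\mu \circ (p \cdot \mu)$, $\mu \circ (\eta \cdot p)$ and $\mu \circ (p \cdot \eta)$ are all cartesian. Identity morphisms of polynomials are trivially cartesian.

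Next, for each required adjustment, Lemma \ref{lemTriviality} supplies a unique 3-cell between the relevant parallel pair of cartesian 2-cells. Applying this to both the claimed adjustment and its formal inverse (between the same pair in the opposite direction) and composing, the uniqueness clause forces the composites to equal the (unique) identity adjustments, so $\alpha$, $\lambda$, $\rho$ are invertible.

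Finally, for the pentagon and triangle coherence axioms of Definition \ref{defPseudomonadIn2CatEnrichedBicategory}: each side of each equation is a pasting of adjustments whose boundary is a parallel pair of cartesian morphisms of polynomials. Hence by Lemma \ref{lemTriviality} (equivalently, local codiscreteness from Theorem \ref{thmPolyECartTrivial}) the two sides are forced to coincide. There is no real obstacle here, since no concrete computation with the pullback construction of the 3-cells is required; the only substantive input is the fact that cartesian 2-cells are closed under composition, which has already been established.
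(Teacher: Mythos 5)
Your proposal is correct and follows exactly the route the paper intends: the paper marks this lemma as immediate from Theorem \ref{thmPolyECartTrivial} (local codiscreteness of the hom 2-categories of $\mathfrak{Poly}\cart_{\mathcal{E}}$), which is precisely your argument via Lemma \ref{lemTriviality}. Your additional check that the boundary 2-cells are cartesian (so that unique adjustments exist in both directions, giving invertibility) is a detail the paper leaves implicit, but it is the right thing to verify.
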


The next result allows us to lift polynomial 2-monads and polynomial pseudomonads \textit{in} $\mathcal{E}$ to 2-monads and pseudomonads \textit{on} the hom 2-categories of $\mathfrak{Poly}\cart_{\mathcal{E}}$. This will be key in Section~\ref{secMonadAlgebra} for identifying the sense in which a natural model $p : \dot{\mathcal{U}} \to \mathcal{U}$ is a pseudoalgebra over the polynomial pseudomonad it induces.

\begin{theorem}
\label{thmPolynomialPseudomonadLifts}
Let $\mathbb{P} = (p,\eta,\mu,\alpha,\lambda,\rho)$ be a polynomial 2-monad (resp.\ pseudomonad) on an object $I$ of a locally cartesian closed category $\mathcal{E}$. Then $\mathbb{P}$ lifts to a 2-monad (resp.\ pseudomonad) $\mathbb{P}^+ = (P, h, m, \dots)$ on $\mathfrak{Poly}\cart_{\mathcal{E}}(I,I)$.
\end{theorem}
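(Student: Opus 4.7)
The plan is to lift each piece of structure from $\mathbb{P}$ to the 2-category $\mathcal{K} := \mathfrak{Poly}\cart_{\mathcal{E}}(I,I)$ by left-whiskering with $p$, that is, by setting $P := p \cdot (-) : \mathcal{K} \to \mathcal{K}$. This is a strict 2-functor because the composition 2-functor of the $\mathbf{2Cat}$-enriched bicategory $\mathfrak{Poly}\cart_{\mathcal{E}}$ (guaranteed by Theorem \ref{thmPolyECartTrivial}) is 2-functorial separately in each argument.

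Next, I would build the unit and multiplication by whiskering the 2-cells of $\mathbb{P}$. For a polynomial $q : I \pto I$, left-whiskering $\eta$ by $q$ gives $\eta \cdot q : i_I \cdot q \pRightarrow p \cdot q$, which I compose with the left unitor $\lambda_{I,I}(q) : q \xrightarrow{\sim} i_I \cdot q$ of the 2Cat-enriched bicategory to obtain a 1-cell $h_q : q \to P(q)$ of $\mathcal{K}$; analogously, $\mu \cdot q : (p \cdot p) \cdot q \pRightarrow p \cdot q$, composed with the associator $\alpha_{I,I,I,I}(p,p,q) : p \cdot (p \cdot q) \xrightarrow{\sim} (p \cdot p) \cdot q$, yields $m_q : P(P(q)) \to P(q)$. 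Since $\eta$ and $\mu$ are cartesian, so are the whiskered 2-cells, and the families $(h_q)_q$ and $(m_q)_q$ can be promoted to pseudo-natural transformations $h : \mathrm{id}_{\mathcal{K}} \Rightarrow P$ and $m : P \circ P \Rightarrow P$: the structural invertible 2-cells required for pseudo-naturality are 3-cells of $\mathfrak{Poly}\cart_{\mathcal{E}}$, which exist uniquely by local codiscreteness (Theorem \ref{thmPolyECartTrivial}).

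Finally, the coherence modifications $\alpha^+, \lambda^+, \rho^+$ of $\mathbb{P}^+$ are constructed by whiskering the adjustments $\alpha, \lambda, \rho$ of $\mathbb{P}$ with $q$ and splicing in the canonical 3-cells coming from the associators and unitors of $\mathfrak{Poly}\cart_{\mathcal{E}}$. Once again, local codiscreteness ensures that every pasting diagram of 3-cells between cartesian morphisms collapses to its unique possible 3-cell, so the pseudomonad coherence axioms of Definition \ref{defPseudomonadIn2CatEnrichedBicategory} are satisfied automatically; in the 2-monad case the adjustments $\alpha,\lambda,\rho$ are identities, and the induced modifications on $\mathcal{K}$ are likewise identities, yielding a strict 2-monad.

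The main obstacle is therefore not verifying axioms, which is free, but organising the definitions of $h_q$, $m_q$ and the coherence modifications in a form that manifestly respects the associator and unitors of the ambient 2Cat-enriched bicategory; conceptually this is the standard observation that a (pseudo)monoid in a monoidal 2-category acts on itself by translation, transported to the enriched bicategorical setting.
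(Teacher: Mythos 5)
Your overall strategy---extract all coherence for free from local codiscreteness (Theorem \ref{thmPolyECartTrivial})---is exactly the paper's, and that part of the argument is sound. The problem is the endofunctor itself: you take $P = p \cdot (-)$, composition of polynomials in $\mathfrak{Poly}\cart_{\mathcal{E}}$, i.e.\ the translation action of the pseudomonoid $p$ on its own endo-hom-2-category. The paper's lift is a different functor: after reducing to $I=1$ via Lemma \ref{lemPolynomialsFromOneToOneTwoCategorical} (so that $p : Y \to X$ is a morphism of $\mathcal{E}$ and the 1-cells of $\mathcal{K} = \mathfrak{Poly}\cart_{\mathcal{E}}(1,1)$ are pullback squares), it sets $P(f) = P_p(f)$, where $P_p : \mathcal{E} \to \mathcal{E}$, $P_p(Z) = \sum_{x \in X} Z^{Y_x}$, is the \emph{extension} of $p$ applied to the arrow $f$; the unit and multiplication $h_f$, $m_f$ are the naturality squares at $f$ of the cartesian natural transformations $P_{\eta}$ and $P_{\mu}$ induced by $\eta$ and $\mu$. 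These two choices of $P$ are genuinely non-isomorphic: over a point $(x,m) \in \sum_{x \in X} A^{Y_x}$, the fibre of the middle map of $p \cdot f$ is $\sum_{y \in Y_x} B_{m(y)}$ (Remark \ref{rmkPolynomialCompositionExplicit}), whereas the fibre of $P_p(f)$ is $\prod_{y \in Y_x} B_{m(y)}$.

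Why this matters: your construction does produce \emph{a} pseudomonad on $\mathcal{K}$ (the ``monoid acts on itself by translation'' argument transports fine, and cartesianness is preserved under horizontal composition by Theorem \ref{thmPolyEBicategory}(d)), so as a bare existence claim it is not false. But the output $\mathbb{P}^+ = (P,h,m,\dots)$ is pinned down by its later use: Definitions \ref{defLiftOfPseudomonad} and \ref{defPolynomialPseudoalgebra} require a polynomial pseudoalgebra to have structure map $\zeta : P_p(f) \pRightarrow f$, which is exactly what matches the $\Pi$-type characterisation in Theorem \ref{thmUnitSigmaPiPoly}(c) and makes Theorem \ref{thmUnitSigmaIffPolynomialPseudomonad}(b) go through. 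Pseudoalgebras for your $p \cdot (-)$ would instead be cartesian morphisms $p \cdot f \pRightarrow f$, i.e.\ left $p$-modules, which encode a $\Sigma$-style closure property rather than dependent products. To repair the proof, replace $p \cdot (-)$ by $P_p(-)$ acting on cartesian squares (using that polynomial functors preserve pullbacks, these being connected limits) and take $h_f$, $m_f$ as above; the remainder of your argument, including the collapse of all coherence data by Theorem \ref{thmPolyECartTrivial}, then applies verbatim.
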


\begin{proof}
By Lemma \ref{lemPolynomialsFromOneToOneTwoCategorical}, we may take $I=1$ without loss of generality, so that $p$ is just a morphism $p : Y \to X$ in $\mathcal{E}$ and $\eta,\mu$ are pullback squares in $\mathcal{E}$ (cf.\ Remark \ref{rmkCartesianMorphismIsPullbackSquare}).

For notational simplicity, write $\mathcal{K}$ to denote the 2-category $\mathfrak{Poly}\cart_{\mathcal{E}}(1,1)$. Note $\mathcal{K}$ has as its underlying category the wide subcategory $\mathcal{E}^{\to}_{\text{cart}}$ of $\mathcal{E}^{\to}$ whose morphisms are the pullback squares. Thus the 0-cells of $\mathcal{K}$ are the morphisms of $\mathcal{E}$, the 1-cells of $\mathcal{K}$ are pullback squares in $\mathcal{E}$, and between any two 1-cells there is a unique 2-cell by Theorem \ref{thmPolyECartTrivial}.

First we must define a 2-functor $P : \mathcal{K} \to \mathcal{K}$. Define $P$ on the 0-cells of $\mathcal{K}$ by letting $P(f)=P_p(f)$ for all $f : B \to A$ in $\mathcal{E}$. Given a 1-cell $\varphi : f \pRightarrow g$ of $\mathcal{K}$---that is, a pullback square in $\mathcal{E}$---let $P(\varphi)$ be the result of applying the extension $P_p$ of $p$ to the pullback square defining $\varphi$, as in:
\begin{center}
\begin{tikzcd}[row sep=huge, column sep=huge]
\sum_{x \in X} B^{Y_x}
\arrow[r, "P_p(\varphi_1)"]
\arrow[d, "P_p(f)"']
\pullbackc{dr}{-0.05}
&
\sum_{x \in X} D^{Y_x}
\arrow[d, "P_p(g)"]
\\
\sum_{x \in X} A^{Y_x}
\arrow[r, "P_p(\varphi_0)"']
&
\sum_{x \in X} C^{Y_x}
\end{tikzcd}
\eqpunct{.}
\end{center}
Note that $P(\varphi)$ is indeed a pullback square, since polynomial functors preserve all connected limits \cite{GambinoKock2013}. Thus $P(\varphi)$ is a 1-cell from $P(f)$ to $P(g)$ in $\mathcal{K}$.

Now $P$ respects identity 1-cells in $\mathcal{K}$, since if $f : B \to A$ is a 0-cell then
$$P(\mathrm{id}_f)_0 = P_p(\mathrm{id}_B) = \mathrm{id}_{P_p(B)} = (\mathrm{id}_{P(f)})_0
\eqpunct{,}$$
and likewise $P(\mathrm{id}_f)_1 = (\mathrm{id}_{P(f)})_1$; and $P$ respects composition of 2-cells in $\mathcal{K}$, since for $i \in \{0,1\}$ we have
$$P(\psi \circ \varphi)_i = P_p((\psi \circ \varphi)_i) = P_p(\psi_i \circ \varphi_i) = P_p(\psi_i) \circ P_p(\varphi_i) = P(\psi)_i \circ P(\varphi)_i = (P(\psi) \circ P(\varphi))_i
\eqpunct{.}$$
Hence the action of $P$ defines a functor on the underlying category of $\mathcal{K}$.

The fact that $P$ extends to a $2$-functor is trivial: given an adjustment $\alpha : \varphi \pRrightarrow \psi$, there is a unique adjustment $P(\varphi) \pRrightarrow P(\psi)$. We take this to be $P(\alpha)$, and note that the axioms governing identity and composition of 2-cells hold trivially by uniqueness of adjustments.

The pseudo-natural transformations $h : \mathrm{id}_{\mathcal{K}} \Rightarrow P$ and $m : P \circ P \Rightarrow P$ giving the unit and multiplication of $\mathbb{P}^+$ are induced by the unit $\eta : i_1 \pRightarrow p$ and $\mu : p \cdot p \pRightarrow p$ of $\mathbb{P}$. Specifically, define the components $h_f : f \pRightarrow P(f)$ and $m_f : P(P(f)) \pRightarrow P(f)$ at a 0-cell $f : B \to A$ of $\mathcal{K}$ to be the following squares, respectively:
\begin{center}
\begin{tikzcd}[row sep=normal, column sep=huge]
B
\arrow[r, "(P_{\eta})_B"]
\arrow[d, "f"']
\pullback
&
\sum_{x \in X} B^{Y_x}
\arrow[d, "P(f)"]
&
\sum_{(x,t) \in \sum_{x \in X} X^{Y_x}} B^{\left(\sum_{y \in Y_x} Y_{t(y)}\right)}
\arrow[r, "(P_{\mu})_B"]
\arrow[d, "P(P(f))"']
\pullbackc{dr}{-0.2}
&
\sum_{x \in X} B^{Y_x}
\arrow[d, "P(f)"]
\\
A
\arrow[r, "(P_{\eta})_A"']
&
\sum_{x \in X} A^{Y_x}
&
\sum_{(x,t) \in \sum_{x \in X} X^{Y_x}} A^{\left(\sum_{y \in Y_x} Y_{t(y)}\right)}
\arrow[r, "(P_{\mu})_A"']
&
\sum_{x \in X} A^{Y_x}
\end{tikzcd}
\eqpunct{.}
\end{center}
Note that these squares commute and are cartesian by naturality and cartesianness of the extensions $P_{\eta},P_{\mu}$ of $\eta,\mu$. That $h$ and $m$ extend to pseudo-natural transformations is immediate from Theorem \ref{thmPolyECartTrivial}: the pseudo-naturality 2-cells in $\mathcal{K}$ are adjustments, so they exist uniquely and satisfy the coherence axioms for pseudo-natural transformations automatically.

If $\mathbb{P}$ is a polynomial 2-monad, it is now easy to verify that the 2-monad laws hold for $\mathbb{P}^+$. If $\mathbb{P}$ is a polynomial pseudomonad, then the pseudomonad laws for $\mathbb{P}^+$ concern existence of and equations between adjustments, hence are trivially true by Theorem \ref{thmPolyECartTrivial}. 
\end{proof}

\begin{definition}
\label{defLiftOfPseudomonad}
Given a polynomial monad (resp.\ pseudomonad) $\mathbb{P}$, the \textbf{lift} of $\mathbb{P}$ is the 2-monad (resp.\ pseudomonad) $\mathbb{P}^+$ as in Theorem \ref{thmPolynomialPseudomonadLifts}.
\end{definition}

\begin{definition}
\label{defPseudoalgebra}
Let $\mathbb{T} = (T, h, m, \alpha, \lambda, \rho)$ be a pseudomonad on a 2-category $\mathcal{K}$. A \textbf{pseudoalgebra} over $\mathbb{T}$ consists of
\begin{itemize}
\item A 0-cell $A$ of $\mathcal{K}$;
\item A 1-cell $a : T(A) \to A$ in $\mathcal{K}$;
\item Invertible 2-cells $\sigma, \tau$ of $\mathcal{K}$, as in:
\begin{center}
\begin{tikzcd}[row sep=huge, column sep=huge]
T(T(A))
\arrow[r, "T(a)"]
\arrow[d, "m_T"']
&
T(A)
\arrow[d, "a"]
\arrow[dl, draw=none, pos=0.5, "\sigma \twocell{225}" description]
&
A
\arrow[r, "h_A"]
\arrow[dr, "\mathrm{id}_A"']
&
T(A)
\arrow[d, "a"]
\arrow[dl, draw=none, pos=0.25, "\tau \twocell{225}" description]
\\
T(A)
\arrow[r, "a"']
&
A
&
~
&
A
\end{tikzcd}
\eqpunct{,}
\end{center}
\end{itemize}
such that the following equations of pasting diagrams hold:

\begin{center}
\begin{tikzcd}[row sep=huge, column sep={40pt}]
T^3A
\arrow[r, "TTa"]
\arrow[d, "m_{TA}"']
\arrow[dr, "Tm_A" description]
&
T^2A
\arrow[dr, "Ta"]
\arrow[d, draw=none, pos=0.5, "T\sigma\ \twocell{270} \phantom{T\sigma}" description]
&
&
T^3A
\arrow[r, "T^2a"]
\arrow[d, "m_{TA}"']
\arrow[dr, draw=none, pos=0.5, "\cong" description]
&
T^2A
\arrow[dr, "Ta"]
\arrow[d, "m_A" description]
&
\\
T^2A
\arrow[dr, "m_A"']
\arrow[r, draw=none, pos=0.5, "\twocell{0}"', "\alpha_A"]
&
T^2A
\arrow[r, "Ta" description]
\arrow[d, "m_A" description]
\arrow[dr, draw=none, pos=0.5, "\sigma \twocell{315} \phantom{\sigma}" description]
&
TA
\arrow[d, "a"]
\arrow[r, draw=none, "=" description]
&
T^2A
\arrow[r, "Ta" description]
\arrow[dr, "m_A"']
&
TA
\arrow[dr, "a" description]
\arrow[r, draw=none, pos=0.5, "\sigma", "\twocell{0}"']
\arrow[d, draw=none, pos=0.5, "\sigma \twocell{270} \phantom{\sigma}" description]
&
TA
\arrow[d, "a" description]
\\
&
TA
\arrow[r, "a"']
&
A
&
&
T(A)
\arrow[r, "a"']
&
A
\end{tikzcd}
\eqpunct{,}
\end{center}

\begin{center}
\begin{tikzcd}[row sep=huge, column sep={40pt}]
T^2A
\arrow[r, "Ta"]
\arrow[dr, "m_A" description]
&
TA
\arrow[dr, "a"{name=domequals}]
\arrow[d, draw=none, pos=0.5, "\sigma \twocell{270} \phantom{\sigma}" description]
&
&
T^2A
\arrow[r, "Ta"]
\arrow[dr, draw=none, pos=0.2, "T\tau \twocell{315}" description]
&
TA
\arrow[dr, "a"]
\arrow[d, draw=none, pos=0.5, "=" description]
&
\\
TA
\arrow[r, "\mathrm{id}_{TA}"']
\arrow[u, "Th_A"]
\arrow[ur, draw=none, pos=0.2, "\lambda_A \twocell{293} \phantom{\lambda_A}" description]
&
TA
\arrow[r, "a"']
&
TA
&
TA
\arrow[u, "Th_A"{name=codequals}]
\arrow[from=domequals, to=codequals, draw=none, pos=0.67, "=" description]
\arrow[rr, "a"']
\arrow[ur, "\mathrm{id}_{TA}" description]
&
~
&
A
\end{tikzcd}
\eqpunct{.}
\end{center}
\end{definition}

\begin{definition}
\label{defPolynomialPseudoalgebra}
Let $\mathbb{P} = (1, p : Y \to X, \dots)$ be a polynomial pseudomonad in a locally cartesian closed category $\mathcal{E}$. A \textbf{polynomial pseudoalgebra} over $\mathbb{P}$ is a pseudoalgebra over the lift $\mathbb{P}^+$. Specifically, it consists of:
\begin{itemize}
\item A polynomial $f : B \to A$;
\item A cartesian morphism of polynomials $\zeta : P_p(f) \pRightarrow f$;
\item Invertible adjustments $\sigma,\tau$ whose types are as in Definition \ref{defPseudoalgebra};
\end{itemize}
such that the adjustments $\sigma,\tau$ satisfy the coherence conditions of Definition \ref{defPseudoalgebra}.
\end{definition}

Much like with polynomial pseudomonads (Lemma \ref{lemMonadDataIsPseudomonad}), merely specifying the \textit{data} for a polynomial pseudoalgebra suffices for the conditions to hold---again, this follows immediately from Theorem \ref{thmPolyECartTrivial}.

\begin{lemma}
\label{lemAlgebraDataIsPseudoalgebra}
Let $\mathbb{P} = (I, p : Y \to X, \dots)$ be a polynomial pseudomonad in a locally cartesian closed category $\mathcal{E}$, let $f : B \to A$ be a polynomial and let $\zeta : P_p(f) \pRightarrow f$ be a morphism of polynomials. Then there are unique adjustments $\sigma,\tau$ making $(f,\zeta,\sigma,\tau)$ into a polynomial pseudoalgebra over $\mathbb{P}$. \qed
\end{lemma}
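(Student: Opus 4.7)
The plan is to reduce the statement to a direct application of Theorem \ref{thmPolyECartTrivial} via the lifting mechanism of Theorem \ref{thmPolynomialPseudomonadLifts}. First, I would pass to the lifted pseudomonad $\mathbb{P}^+$ on the 2-category $\mathcal{K} = \mathfrak{Poly}\cart_{\mathcal{E}}(I,I)$, so that a polynomial pseudoalgebra $(f,\zeta,\sigma,\tau)$ over $\mathbb{P}$ is, by Definition \ref{defPolynomialPseudoalgebra}, precisely a pseudoalgebra over $\mathbb{P}^+$ in the sense of Definition \ref{defPseudoalgebra}. In this translation, the 0-cell is $f$, the 1-cell is $\zeta$, and $\sigma,\tau$ are invertible 2-cells in $\mathcal{K}$ between specific parallel pairs of cartesian morphisms of polynomials, namely
\[
\sigma : \zeta \circ P(\zeta) \pRrightarrow \zeta \circ m_f, \qquad \tau : \zeta \circ h_f \pRrightarrow \mathrm{id}_f.
\]

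Next, I would invoke Theorem \ref{thmPolyECartTrivial}: the hom 2-category $\mathcal{K}$ is locally codiscrete, so between any parallel pair of cartesian morphisms of polynomials there exists a unique adjustment, and this adjustment is automatically invertible (since both it and its would-be inverse must coincide with the unique adjustment from a morphism to itself, namely the identity). Applied to the parallel pairs above, this yields unique invertible adjustments $\sigma$ and $\tau$ of the required types. Existence and uniqueness of the data are therefore settled in one stroke.

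Finally, the pseudoalgebra coherence axioms (the pentagon-type and triangle-type pasting equations of Definition \ref{defPseudoalgebra}) are equalities between pasting diagrams of 2-cells in $\mathcal{K}$. Each side of each axiom is a 2-cell between the same parallel pair of 1-cells in $\mathcal{K}$, and by local codiscreteness there is only one such 2-cell. Hence both sides are equal automatically, and no further verification is needed.

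I do not expect any real obstacle: once one notes that $\mathcal{K}$ is locally codiscrete, the entire argument is formal. The only mildly subtle point worth stating carefully is why the unique adjustments $\sigma,\tau$ are \emph{invertible}, but as noted above this is forced by the uniqueness of adjustments between any parallel pair (so that composing in either order returns the identity adjustment). The proof is therefore essentially a one-line appeal to Theorems \ref{thmPolyECartTrivial} and \ref{thmPolynomialPseudomonadLifts}, exactly parallel in spirit to Lemma \ref{lemMonadDataIsPseudomonad}.
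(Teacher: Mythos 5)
Your proposal is correct and follows exactly the route the paper intends: the paper marks this lemma \qed precisely because, as you argue, it is immediate from Theorem \ref{thmPolyECartTrivial} (local codiscreteness of the hom 2-categories) together with the lifting of Theorem \ref{thmPolynomialPseudomonadLifts}, in direct parallel with Lemma \ref{lemMonadDataIsPseudomonad}. Your additional remark on why the unique adjustments are invertible is a worthwhile explicit note, but it is the same argument the paper leaves implicit.
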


%%%%%%%%%%%%%%%%%%%%%%%%%%%%%%%%%%%%%%%%%%%%%%%%%%%
%%
%% Type theory is a pseudomonad and a pseudoalgebra
%%
%%%%%%%%%%%%%%%%%%%%%%%%%%%%%%%%%%%%%%%%%%%%%%%%%%%
\newpage
\section{Type theory is a pseudomonad and a pseudoalgebra}
\label{secMonadAlgebra}

The results of Section \ref{secTricategory} allow us to precisely formulate and easily prove the conjecture outlined in Section \ref{secNaturalModels}.

\begin{theorem}
\label{thmUnitSigmaIffPolynomialPseudomonad}
Let $\mathsf{C} = (\mathbb{C},p)$ be a natural model.
\begin{enumabc}
\item $\mathsf{C}$ supports a unit type and dependent sum types if and only if $p$ can be equipped with the structure of a polynomial pseudomonad $\mathbb{P}$ in $\widehat{\mathbb{C}}$.
\item $\mathsf{C}$ additionally supports dependent product types if and only if $p$ can be equipped with the structure of a polynomial pseudoalgebra over $\mathbb{P}$.
\end{enumabc}
\end{theorem}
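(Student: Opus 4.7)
The plan is to reduce both (a) and (b) to results already established in the preceding two sections, so that little actual work remains beyond a careful bookkeeping argument. The key observation is that Theorem \ref{thmUnitSigmaPiPoly} already translates the three type-forming operations into statements about cartesian morphisms of polynomials, namely $\eta : i_1 \pRightarrow p$, $\mu : p \cdot p \pRightarrow p$ and $\zeta : P_p(p) \pRightarrow p$ in $\mathbf{Poly}_{\widehat{\mathbb{C}}}$. The remaining issue is to promote these morphisms into the higher-dimensional data required by the notions of polynomial pseudomonad (Definition \ref{defPolynomialPseudomonad}) and polynomial pseudoalgebra (Definition \ref{defPolynomialPseudoalgebra}); this promotion is exactly what Lemmas \ref{lemMonadDataIsPseudomonad} and \ref{lemAlgebraDataIsPseudoalgebra} are designed to deliver.

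For part (a), I would argue in two directions. Forwards: assuming unit and dependent sum types, Theorem \ref{thmUnitSigmaPiPoly}(a,b) yields cartesian morphisms $\eta : i_1 \pRightarrow p$ and $\mu : p \cdot p \pRightarrow p$; then Lemma \ref{lemMonadDataIsPseudomonad} (which in turn rests on the local codiscreteness of $\mathfrak{Poly}\cart_{\widehat{\mathbb{C}}}$ established in Theorem \ref{thmPolyECartTrivial}) supplies unique invertible adjustments $\alpha, \lambda, \rho$ automatically satisfying the pseudomonad coherence axioms. Backwards: a polynomial pseudomonad structure on $p$ has, by definition, $\eta$ and $\mu$ as part of its data, so Theorem \ref{thmUnitSigmaPiPoly}(a,b) conversely gives the type-forming structure. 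This yields the claimed biconditional.

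For part (b), I would proceed analogously, using the lift $\mathbb{P}^+$ of the polynomial pseudomonad $\mathbb{P}$ to a pseudomonad on $\mathfrak{Poly}\cart_{\widehat{\mathbb{C}}}(1,1)$ furnished by Theorem \ref{thmPolynomialPseudomonadLifts}. Assuming $\mathsf{C}$ supports dependent product types in addition to part (a), Theorem \ref{thmUnitSigmaPiPoly}(c) provides a cartesian morphism $\zeta : P_p(p) \pRightarrow p$, i.e.\ a 1-cell $\zeta : P(p) \to p$ in the 2-category on which $\mathbb{P}^+$ acts. Lemma \ref{lemAlgebraDataIsPseudoalgebra} then supplies unique invertible adjustments $\sigma, \tau$ completing the pseudoalgebra structure, again with all coherence axioms holding automatically by codiscreteness. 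The converse is immediate, since a polynomial pseudoalgebra structure carries $\zeta$ as part of its underlying data.

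The ``main obstacle'' here is not really an obstacle at all, in the sense that all difficulty has been front-loaded into Section~\ref{secTricategory}: the nontrivial content is Theorem \ref{thmPolyECartTrivial}, which makes parallel pairs of cartesian morphisms uniquely isomorphic via adjustments and thereby trivialises the pseudomonad and pseudoalgebra coherence data. The only care I would need to exercise is in checking that the 1-cells identified by Theorem \ref{thmUnitSigmaPiPoly} have precisely the domains and codomains demanded by Definitions \ref{defPolynomialPseudomonad} and \ref{defPolynomialPseudoalgebra}, and in noting for part (b) that the relevant pseudoalgebra lives over the lift $\mathbb{P}^+$ rather than over $\mathbb{P}$ itself, so that the role of Theorem \ref{thmPolynomialPseudomonadLifts} is to make the statement well-typed.
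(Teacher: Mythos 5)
Your proposal is correct and follows essentially the same route as the paper's proof: invoke Theorem \ref{thmUnitSigmaPiPoly} to translate the type-forming operations into the cartesian morphisms $\eta$, $\mu$, $\zeta$, then apply Lemmas \ref{lemMonadDataIsPseudomonad} and \ref{lemAlgebraDataIsPseudoalgebra} (resting on the local codiscreteness of Theorem \ref{thmPolyECartTrivial}) to supply the unique coherence adjustments in both directions. Your additional remark about the pseudoalgebra living over the lift $\mathbb{P}^+$ is a correct reading of Definition \ref{defPolynomialPseudoalgebra} and is implicit in the paper's argument.
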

\begin{proof}
By Theorem \ref{thmUnitSigmaPiPoly}, $\mathsf{C}$ supports a unit type and dependent sum types if and only if there exist cartesian morphisms of polynomials $\eta : i_1 \pRightarrow p$ and $\mu : p \cdot p \pRightarrow p$, and additionally supports dependent product types if and only if there exists a cartesian morphism of polynomials $\zeta : P_p(p) \pRightarrow p$. By Lemmas \ref{lemMonadDataIsPseudomonad} and \ref{lemAlgebraDataIsPseudoalgebra}, there are unique adjustments turning $(p,\eta,\mu)$ into a polynomial pseudomonad $\mathbb{P}$, and unique adjustments turning $(p,\zeta)$ into a polynomial pseudoalgebra over $\mathbb{P}$.
\end{proof}

\begin{remark}
Theorem \ref{thmUnitSigmaIffPolynomialPseudomonad} makes a connection between logic and algebra by exhibiting a correspondence between laws concerning dependent sums and dependent products in type theory with laws concerning monads in algebra. Specifically, for $\eta : \iota_1 \pRightarrow p$, $\mu : p \cdot p \pRightarrow p$ and $\zeta : P_p(p) \pRightarrow p$, the (pseudo)monad and (pseudo)algebra equations correspond to certain type isomorphisms as follows:
\begin{center}
\begin{tabular}{c|c}
\textbf{Monads and algebras} & \textbf{Type theory} \\ \hline
$\mu \circ (p \cdot \mu) \cong \mu \circ (\mu \cdot p)$ & $\sum_{x:A} \sum_{y:B(x)} C(x,y) \cong \sum_{\langle x,y \rangle : \sum_{x:A} B(x)} C(x,y)$ \\
& \\
$\mu \circ (p \cdot \eta) \cong \mathrm{id}_p$ & $\sum_{x:A} \mathbf{1} \cong A$ \\
& \\
$\mu \circ (\eta \cdot p) \cong \mathrm{id}_p$ & $\sum_{x:\mathbf{1}} A \cong A$ \\
& \\
$\zeta \circ (p \cdot \zeta) \cong \zeta \circ (\mu \cdot p)$ & $\prod_{x:A} \prod_{y : B(x)} C(x,y) \cong \prod_{\langle x,y \rangle : \sum_{x:A} B(x)} C(x,y)$ \\
& \\
$\zeta \circ (\eta \cdot p) \cong \mathrm{id}_p$ & $\prod_{x:\mathbf{1}} A \cong A$
\end{tabular}
\end{center}
\end{remark}

%%%%%%%%%%%%%%%%%%%%%%%%%%%%%%%%%%%%%%%%%%%%%%%%%%%
%%
%% Acknowledgements
%%
%%%%%%%%%%%%%%%%%%%%%%%%%%%%%%%%%%%%%%%%%%%%%%%%%%%
\newpage
\section{Acknowledgements}

Nicola Gambino hosted the second-named author at the University of Leeds in early 2017, and was very helpful in catalysing our progress; Jonas Frey has also provided a great deal of useful input since his arrival at Carnegie Mellon in the summer of 2016. We are grateful for numerous discussions with both Jonas and Nicola.

Since our first public presentation of this material in March 2017 \cite{AwodeyNewstead2017}, we have discovered that Thorsten Altenkirch and Gun Pinyo have independently presented related results (under the name \textit{monadic containers}) at the TYPES conference that took place in May--June 2017 in Budapest \cite{AltenkirchPinyo2017}.

We gratefully acknowledge the support of the Air Force Office of Scientific Research through MURI grant FA9550-15-1-0053. Any opinions, findings and conclusions or recommendations expressed in this material are those of the authors and do not necessarily reflect the views of the AFOSR.

\bibliographystyle{alpha}
\bibliography{pseudomonads}
\addcontentsline{toc}{section}{References}
\nocite{*}

\end{document}